\documentclass{siamltex}

\usepackage{amsfonts, amsmath, amssymb, psfrag, graphicx, bbm, mathrsfs}
\usepackage{mathabx}   



\usepackage[colorlinks=true]{hyperref}
\hypersetup{urlcolor=blue, citecolor=blue, linkcolor=blue}


\newtheorem{result}{Theorem}


\def\fin{\ifmmode{\Large$\diamond$}\else{\unskip\nobreak\hfil
    \penalty50\hskip1em\null\nobreak\hfil{\Large$\diamond$}
    \parfillskip=0pt\finalhyphendemerits=0\endgraf}\fi}

\def\be#1#2\ee{\begin{equation}\label{eq:#1}#2\end{equation}}
\def\req#1{{\rm(\ref{eq:#1})}}
\def\bdm  {\begin{displaymath}}
  \def\edm  {\end{displaymath}}
\def\bdmal{\begin{displaymath}\begin{aligned}}
    \def\edmal{\end{aligned}\end{displaymath}}


\mathchardef\PhiG="0108
\mathchardef\Omega="010A
\mathchardef\Sigma="0106
\mathchardef\Gamma="0100
\mathchardef\Lambda="0103

\newcommand{\N}{{\mathord{\mathbb N}}}
\newcommand{\R}{{\mathord{\mathbb R}}}

\renewcommand{\S}{\mathord{\cal S}_{\partial\D}}
\newcommand{\Sp}{\mathord{\cal S}_{\partial\D'}}

\newcommand{\dd}{{\mathbf{d}}}

\newcommand{\xx}{{\mathbf{x}}}

\newcommand{\B}{{\cal B}}

\newcommand{\norm}[1]{\|#1\|}


\newcommand{\Span}{{\rm{span}}}
\newcommand{\rmd}{\,\mathrm{d}}
\newcommand{\ds}{\rmd s}
\newcommand{\dt}{\rmd t}

\newcommand{\eps}{\varepsilon}

\def\req#1{{\rm(\ref{eq:#1})}}

\makeatletter
\newcommand{\dupdots}{\mathinner{\mkern1mu\raise\p@
    \vbox{\kern7\p@\hbox{.}}\mkern2mu
    \raise4\p@\hbox{.}\mkern2mu\raise7\p@\hbox{.}\mkern1mu}}
\makeatother
\makeatletter
\newcount\c@MaxMatrixCols \c@MaxMatrixCols=10

\makeatother
\newenvironment{cmatrix}{\left[\cmatrixc}{\endmatrix\right]}

\def\thsp{\hspace*{0.1ex}}


%
 {\endMakeFramed}

\newcommand{\U}{{{\mathcal U}}}

\newcommand{\X}{{\mathcal X}}

\newcommand{\D}{{\mathscr D}}

\newcommand{\A}{{\cal A}}


\makeatletter
\renewcommand\@biblabel[1]{#1.}
\makeatother


\title{Lipschitz stability of an inverse conductivity problem
with two Cauchy data pairs}
\author{Martin Hanke\thanks{Institut f\"ur Mathematik, Johannes
    Gutenberg-Universit\"at Mainz, 55099 Mainz, Germany
    ({\tt hanke@math.uni-mainz.de}).}}

\begin{document}
\sloppy
\maketitle

\begin{abstract}
In 1996 Seo proved that two appropriate pairs of current and voltage data
measured on the surface of a planar homogeneous object are sufficient to 
determine a conductive polygonal inclusion with known deviating conductivity. 
Here we show that the corresponding linearized forward map is injective,
and from this we deduce Lipschitz stability of the solution of the
original nonlinear inverse problem. We also treat the case of
an insulating polygonal inclusion, in which case a single pair of Cauchy data
is already sufficient for the same purpose.
\end{abstract}

\begin{keywords}
polygonal inclusion, conductivity equation, shape derivative
\end{keywords}

\begin{AMS}
{\sc 35R30, 35J25, 65J22}
\end{AMS}


%

\section{Introduction}
\label{Sec:Intro}
We consider the boundary value problem
\be{conductivity-equation}
   \nabla\cdot (\sigma \nabla u) \,=\, 0  \quad \text{in $\Omega$}\,, \qquad
   \frac{\partial}{\partial\nu} u \,=\, f \quad \text{on $\partial\Omega$}\,,
\ee
for the electric potential $u$ in a planar object $\Omega$, 
when a (quasi-)static boundary current $f$ with vanishing mean
is applied on its boundary. To be specific we focus on the situation 
that the object contains a so-called inclusion $\D$ of some other conducting 
material, such that the spatial conductivity distribution is given by
\be{sigma-Beretta}
   \sigma \,=\, \begin{cases}
                   k & \text{in $\D$}\,, \\
                   1 & \text{in $\Omega\setminus\overline\D$}\,,
                \end{cases}
\ee
with a nonnegative value $k\neq 1$.

The inverse conductivity problem that we are interested in 
seeks to recover the inclusion
(i.e., location and shape of $\D$) from measurements of the potential on
the boundary of the object.
The question whether this inverse problem is uniquely solvable
has a long-standing history. In the formal (degenerate) case $k=0$, i.e., 
when the inclusion is taken to be insulating
(cf.\ the discussion in Sect.~\ref{Sec:insulating_case}) and its complement
is a connected set, then it is known that
the Cauchy data of $u$ on $\partial\Omega$ do indeed uniquely determine 
the inclusion; for a proof of this result, cf., e.g., 
Beretta and Vessella~\cite{BeVe98}.

Less is known, however, when the conductivity in the inclusion is a 
nonzero constant different from one. For this case
Friedman and Isakov~\cite{FrIs89} proved that a convex polygonal inclusion
is uniquely determined by this data, provided its conductivity
is known and the diameter of $\D$ is  
smaller than its distance to the boundary of the object.
Barcel\'o, Fabes, and Seo~\cite{BFS94}, and 
Alessandrini and Isakov~\cite{AlIs96} were able to drop the size constraint 
on the polygonal inclusion for the prize of accepting only certain admissible 
boundary currents $f$ for probing the object. 
Finally, Seo~\cite{Seo96} proved that the convexity assumption 
on the polygon can also be omitted when the boundary potentials 
for two appropriate probing currents $f_1,f_2$ are given; 
see Assumption~\req{Ass:Seo} in Section~\ref{Sec:inverse}.

It is known that inverse conductivity problems in general are badly ill-posed
in the sense that the solution lacks continuous dependence on the given data;
some conditional stability -- generically of logarithmic type -- 
can be restored by providing further a priori information on the conductivity,
cf., e.g., \cite{BBR01,BeVe98,CFR10}.

The past twenty years have seen increasing activities in deriving Lipschitz
stability estimates for inverse conductivity problems
under very restrictive conditions on the set of admissible conductivities. 
One of the first results in this direction was
obtained by Alessandrini and Vessella~\cite{AlVe05} who showed that if
\emph{all} Cauchy pairs for $u$ in \req{conductivity-equation} 
are known, i.e., if the full (or the local) Neumann-Dirichlet operator 
associated with the differential equation in \req{conductivity-equation} 
is given, then the \emph{values} of a piecewise constant conductivity 
$\sigma$ with respect to a known partitioning of $\Omega$
into finitely many subdomains depend Lipschitz continuously on these data.
Later it was proved by Harrach~\cite{Harr19} that already a finite number of 
Cauchy pairs is sufficient for this result to hold; 
see also the work by Alberti and Santacesaria~\cite{AlSa19,AlSa22}.

Whereas these results assume the spatial structure of $\sigma$ to be known
and the quantitative details are being searched for, 
the conductivity Ansatz~\req{sigma-Beretta} with known $k$ but 
unknown form and location of the inclusion, was treated by 
Beretta and Francini~\cite{BeFr22}. 
They established Lipschitz stability in terms of the Hausdorff distance 
between the boundaries of two admissible inclusions, if it is a priori 
known that they have the shape of a polygon. In contrast to the aforementioned 
uniqueness results by Beretta/Vessella and by Seo, however, the stability 
result in \cite{BeFr22} again requires the full Neumann-Dirichlet map as data. 
See also \cite{ABFV22,BFV21} for extensions of this finding to layered 
-- instead of homogeneous -- background media and to polyhedral inclusions
in three space dimensions, respectively.  

Recently, Alberti, \'{A}rroyo, and Santacesaria~\cite{AAS23} showed 
(for triangular inclusions) that the Lipschitz result by Beretta and Francini
remains valid, if the given data correspond to a sufficiently large, 
but finite number of probing currents. Here we prove the following statement
in the spirit of Seo's original uniqueness result:
If only two pairs of Cauchy data are given which fulfill 
Seo's uniqueness assumption for a conductive inclusion,
then this minimal dataset is enough to have Lipschitz stability. 
We also consider the degenerate case of an insulating polygonal inclusion
and establish Lipschitz stability for a single (nontrivial) Cauchy data pair.
We mention in passing that for a somewhat related setting, 
namely the reconstruction of a linear crack within a homogeneous planar object,
Lipschitz stability with only two pairs of Cauchy data had been
established by Alessandrini, Beretta, and Vessella~\cite{ABV96} back in 1996.  

In contrast to the analysis in the above works which, in principle,
allow for an evaluation of the corresponding Lipschitz constant, our method
is non-quantitative. Rather, we use a general methodology worked out
by Bourgeois~\cite{Bour13}, building on earlier work by
Bacchelli and Vessella~\cite{BaVe06}.
As is transparent from their results, the key ingredients for Lipschitz
stability in general nonlinear inverse problems are
\begin{enumerate}
\item a specification of the quantity of interest in terms of finitely
many parameters,
\item the restriction of these parameters to a compact set,
\item injectivity of the forward operator on this compact set,
\item continuous differentiability of the forward operator, and
\item injectivity of the Jacobian of the forward operator.
\end{enumerate}
Since we can build on the uniqueness results by Seo and Beretta/Vessella,
respectively, and since differentiability results are also available,
it remains for us to investigate the injectivity of the associated Jacobian. 

To do so we need a specification of this Jacobian (the so-called shape
derivative) in terms of an inhomogeneous transmission problem for the 
Laplace equation; see~\req{HeRu98}. This specification -- which was well-known
for inclusions with smooth boundaries -- has been verified for polygonal
inclusions in a companion paper~\cite{Hank24a} submitted elsewhere, which is
currently under peer-review. It should be emphasized that all the theorems 
in the present paper hinge upon this auxiliary result.

The outline of this paper is as follows. 
We start in Section~\ref{Sec:forward} by reviewing known properties of the
solution $u$ of \req{conductivity-equation}, when the inclusion $\D$ is a
conductive polygon, including the aforementioned differentiability result 
with respect to its shape. In Section~\ref{Sec:inverse} we specify the
associated inverse problem as it has been introduced by Seo, and we prove
that the respective Jacobian is injective. 
Then, in Section~\ref{Sec:Lipschitz}, we adapt the method 
from \cite{BaVe06,Bour13} to our needs and establish the corresponding
Lipschitz stability result (Theorem~\ref{Thm:Lipschitz}).
We conclude the paper by treating the case of an insulating polygonal 
inclusion in Section~\ref{Sec:insulating_case}.

\section{The forward problem for a conductive polygonal inclusion}
\label{Sec:forward}
We assume throughout that $\Omega$ is a two-dimensional bounded domain 
with smooth boundary, and that the inclusion $\D$ is a
polygonal domain with simply connected closure $\overline\D\subset\Omega$. 
We denote by $\nu$ the outer normal vector on the boundaries of $\D$ 
and of $\Omega$, respectively. 
Concerning the spatial conductivity distribution~\req{sigma-Beretta} 
we make the assumption that the constant conductivity 
$k\in\R^+\setminus\{1\}$ in $\D$ is known and fixed; 
see Section~\ref{Sec:insulating_case} for the case when $k=0$. 
When the probing boundary current $f$ satisfies
\bdm
   f\in L^2_\diamond(\partial\Omega) \,=\,
   \Bigl\{\,f\in L^2(\partial\Omega) \,:\,
            \int_{\partial\Omega}f\ds = 0\,\Bigr\}\,, 
\edm
then the corresponding electric potential $u$ is the unique weak solution  
\bdm
   u\in H^1_\diamond(\Omega) \,=\,
   \Bigl\{\,u \in H^1(\Omega) \,:\,
            \int_{\partial\Omega}u\ds = 0\,\Bigr\} 
\edm
of \req{conductivity-equation}.

Let $x_i$ and $\Gamma_i$, $i=1,\dots,n\geq 3$,
denote the vertices and (relatively open) edges of $\D$,
respectively, where we assume that $x_i$ connects $\Gamma_i$ and $\Gamma_{i+1}$.
Here and throughout we identify $\Gamma_{n+1}$ with $\Gamma_1$, 
and also $x_{i+n}$ with $x_i$ for $i=1,\dots,n$, respectively.
On $\Gamma_i$ we let the unit tangent vector $\tau$ point in the direction of
$x_i$. We stipulate the general assumption that the induced orientation of 
$\partial\D$ is counterclockwise,
and that the interior angles $\alpha_i\in(0,2\pi)$, $i=1,\dots,n$, 
are all different from $\pi$. A polygon which satisfies all the above
requirements will subsequently be called \emph{admissible}.

The two components $u_-=u|_\D$ and $u_+=u|_{\Omega\setminus\overline\D}$ of $u$
are harmonic functions. Moreover, they satisfy the transmission conditions
\bdm
   u_- \,=\, u_+ \qquad \text{and} \qquad 
   k\,\frac{\partial}{\partial\nu} u_- \,=\, \frac{\partial}{\partial\nu} u_+
\edm
on every edge $\Gamma_i$. 
Therefore both components can be extended by reflection
across each of the edges of $\D$, and hence, they both are infinitely smooth
and all their derivatives extend continuously onto the edges.
Concerning the behavior of $u$ at the vertices we introduce 
a local coordinate system for
\bdm
   x \in \B_{r_0}(x_i) \,=\, \bigl\{x\in\R^2\,:\, |x-x_i|<r_0\bigr\}\,,
\edm
with $r_0$ sufficiently small, namely
\be{localcoordinates}
   x \,=\, x_i \,+\, 
           \bigl(r\cos(\theta_i+\theta),r\sin(\theta_i+\theta)\bigr)\,, \qquad
   0 < r < r_0\,, \ 0 \leq \theta < 2\pi\,,
\ee
where $\theta_i$ is such that the values
$\theta=0$, $\theta\in(0,\alpha_i)$, $\theta=\alpha_i$, and 
$\theta\in(\alpha_i,2\pi)$ correspond to points on $\Gamma_{i+1}$, in $\D$, 
on $\Gamma_i$, and in $\Omega\setminus\overline\D$, respectively.
It has been shown in \cite{BFI92} that in this coordinate system
the potential $u$ has an asymptotic expansion
\be{BFI92}
   u(x) \,=\, u(x_i) 
              \,+\, \sum_{j=1}^\infty \beta_{ij} y_{ij}(\theta)r^{\gamma_{ij}}\,,
\ee
where $y_{ij}$ are continuous functions of the polar angle, given by
(in general different) nontrivial linear combinations of 
$\cos\gamma_{ij}\theta$ and $\sin\gamma_{ij}\theta$ in $(0,\alpha_i)$ and in 
$(\alpha_i,2\pi)$, 
respectively, and the exponents $\gamma_{ij}$, $j=1,2,\dots$, are all 
the positive solutions $\gamma$ of
\be{gamma-cond}
   \bigl|\thsp\sin \gamma(\alpha_i-\pi)\bigr| 
   \,=\, \lambda\thsp \bigl|\thsp\sin\gamma\pi\bigr|\,, \qquad 
   \lambda \,=\, \Bigl|\frac{k+1}{k-1}\Bigr|\,,
\ee
which we assume to be in increasing order. Both, $y_{ij}$ and $\gamma_{ij}$ 
are independent of the probing current; only the expansion coefficients 
$\beta_{ij}=\beta_{ij}[f]\in\R$ depend
linearly on $f$. See \cite{BFI92}, \cite{Seo96}, or \cite{Hank24a} for
further details.
We refer to Figure~\ref{Fig:sines} for a graphical illustration of 
equation~\req{gamma-cond}. Since the amplitude $\lambda$ of the 
sine wave on the right-hand side of \req{gamma-cond} -- the darker graph
in Figure~\ref{Fig:sines} -- is always greater than one, and
since $0<|\pi-\alpha_i|<\pi$, it is not difficult to see that
\be{gamma}
   \frac{1}{2} \,<\, \gamma_{i1} \,<\, 1\,, \qquad
   1 \,<\, \gamma_{i2} \,<\, \frac{3}{2}\,, \qquad \text{and} \qquad 
   \gamma_{ij} \,>\, \frac{3}{2} \quad \text{for $j \geq 3$}\,.
\ee

\begin{figure}
  \centerline{
    \includegraphics[width=7cm]{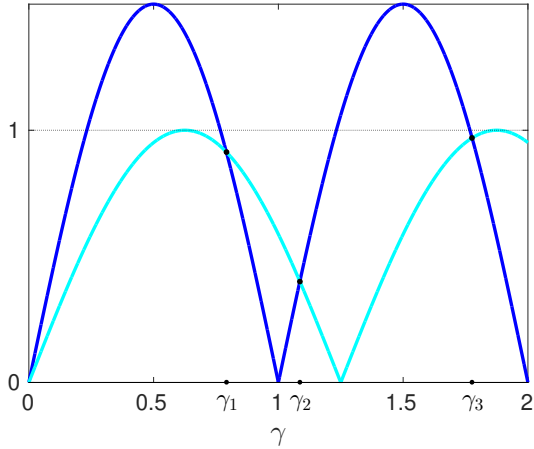}}
  \caption{The graphs of the two functions of $\gamma$ in \req{gamma-cond}:
           The lighter curve corresponds to the function on the left-hand side, 
           the darker one is the graph of the function on the right-hand side. 
           The marked abscissae of the three intersection points of the two 
           graphs are the solutions of \req{gamma-cond} in the 
           interval $(0,2]$.}
  \label{Fig:sines}
\end{figure}

We denote by
\be{Lambda}
   \Lambda_f\,:\,\D \,\mapsto \, u|_{\partial\Omega}\,,
\ee
the map, which takes an admissible polygon $\D$ onto the trace of the
solution $u$ of \req{conductivity-equation} on $\partial\Omega$. 
Let $d_i\in\R^2$, $i=1,\dots,n$, be given. 
Then we define a vector field $h:\partial\D\to\R^2$ by a piecewise 
linear interpolation of the data
\bdm
   h(x_i) \,=\, d_i\,, \qquad i=1,\dots,n\,,
\edm
i.e., both vector components of $h$ belong to the space of linear splines
over $\partial\D$ with the vertices $x_i$ as its nodes. 
If $h\to 0$ in any norm on this (finite-dimensional) linear space 
denoted by $\S^2$, e.g., with respect to the norm
\be{norm-h}
   \norm{h} \,=\, \max_{i=1,\dots,n}|h(x_i)|\,,
\ee
then it has been shown in \cite{BFV17} that the operator in \req{Lambda} is
Fr\'echet differentiable with a (shape) derivative 
$\partial\Lambda_f(\D)\in{\cal L}(\S^2,L^2_\diamond(\partial\Omega))$. 
An explicit representation of the derivative
$\partial\Lambda_f(\D)h$ in the direction of $h$ is given by the trace on
$\partial\Omega$ of the solution $u'$ of the inhomogeneous transmission problem
\be{HeRu98}
\begin{array}{c}
   \Delta u' \,=\, 0 \quad \text{in $\Omega\setminus\partial\D$}\,, \qquad
   \dfrac{\partial}{\partial\nu}u' \,=\, 0 \quad \text{on $\partial\Omega$}\,,
   \qquad 
   {\displaystyle \int_{\partial\Omega} u'\ds \,=\, 0\,,}
   \\[3ex]
   \phantom{\dfrac{\partial}{\partial\nu}\!}
   u'_+-u'_- \,=\, (1-k)(h\cdot\nu)\dfrac{\partial}{\partial\nu}u_{-} \qquad
   \text{on $\partial\D$}\,, \\[3ex]
   \dfrac{\partial}{\partial\nu} u'_+ - k\dfrac{\partial}{\partial\nu} u'_-
      \,=\, (1-k)\,\dfrac{\partial}{\partial\tau}
                \Bigl((h\cdot\nu)\dfrac{\partial}{\partial\tau}u\Bigr) \qquad
   \text{on $\partial\D$}\,,
\end{array}
\ee
where $u'_-=u'|_\D$ and $u'_+=u'|_{\Omega\setminus\overline\D}$.
This connection was first established by 
Hettlich and Rundell~\cite{HeRu98} for smooth inclusions, and 
it is shown in \cite{Hank24a} that it holds true for admissible polygons
$\D$ as well.
Take note that the inhomogeneous transmission data in \req{HeRu98} are
infinitely smooth on $\partial\D$ except for the vertices, where in general
$h\cdot\nu$ is discontinuous and the directional derivatives of $u$ tend to 
infinity; compare~\req{BFI92} and \req{gamma}.

\section{The inverse problem for a conductive polygonal inclusion}
\label{Sec:inverse}
In \cite{Seo96} Seo investigated the forward map~\req{Lambda} for two linearly 
independent piecewise continuous probing currents
$f_1,f_2\in L^2_\diamond(\partial\Omega)$ under the assumption that the set
\be{Ass:Seo}
   \bigl\{\, x\in\partial\Omega\,:\, f(x) \geq 0\,\bigr\}
\ee
is connected for every linear combination $f= \mu_1 f_1 + \mu_2 f_2$ of them;
see \cite{Seo96} for examples, how to choose $f_1$ and $f_2$ this way.
This somewhat strange assumption arises in the context of an
auxiliary result by Seo, the proof of which can also be found in \cite{Seo96}:

\begin{result}
\label{Thm:Seo}
Let $f\in L^2_\diamond(\partial\Omega)\setminus\{0\}$ be a piecewise continuous
function, and assume that the weak solution $u\in H^1_\diamond(\Omega)$ of
\req{conductivity-equation} satisfies
\bdm
   \bigl|u(x)-u(x_0)\bigr| \,\leq\, C|x-x_0|^{3/2}
\edm
for some $x_0\in\Omega$, $C>0$, and all $x\in\Omega$. Then the set
$\{x\in\partial\Omega\,:\,f(x)\geq 0\}$ is not connected. 
\end{result}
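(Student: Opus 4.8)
The plan is to argue by contradiction, assuming the local growth bound holds at an interior point $x_0$ while the positivity set $\{x\in\partial\Omega:f(x)\geq 0\}$ is connected, and to extract a contradiction from the interplay between the nodal structure forced by the boundary data and the smoothness forced by the growth estimate. The key observation is that the growth bound $|u(x)-u(x_0)|\leq C|x-x_0|^{3/2}$ says the potential is very flat near $x_0$: its gradient vanishes at $x_0$. If $x_0$ is a regular interior point, then locally $u-u(x_0)$ behaves like a harmonic polynomial whose lowest nonvanishing homogeneous part has degree at least $2$, so $x_0$ is a critical point at which at least two nodal lines of $u-u(x_0)$ cross, dividing a small neighborhood into at least four sectors on which $u-u(x_0)$ alternates in sign. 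If instead $x_0$ is a vertex $x_i$ of $\D$, then by the asymptotic expansion~\req{BFI92} together with the exponent bounds~\req{gamma}, the leading singular term $r^{\gamma_{i1}}$ with $\gamma_{i1}<3/2$ is ruled out by the growth hypothesis, so $\beta_{i1}[f]=0$ and again the first surviving term produces at least a sign change of the prescribed multiplicity. The upshot is that the hypothesis forces the nodal set of $u-u(x_0)$ to have a high-multiplicity crossing at $x_0$.

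Next I would pass from the local nodal picture near $x_0$ to a global statement about the level set $\{u=u(x_0)\}$ inside $\Omega$. Because $u$ is harmonic in each of the two phases and real-analytic up to and across the edges by the reflection argument recorded after~\req{sigma-Beretta}, the level set $\{x\in\Omega:u(x)=u(x_0)\}$ consists of finitely many real-analytic arcs meeting only at the (finitely many) critical points of $u$, and it cannot terminate in the interior: each component arc either closes up into a loop or runs out to the boundary $\partial\Omega$. A loop would enclose a region on whose boundary $u\equiv u(x_0)$, forcing $u$ to be constant there by the maximum principle (respecting the transmission conditions), which contradicts $f\neq 0$. Hence every arc of the level set emanating from the crossing at $x_0$ must reach $\partial\Omega$. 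Since at $x_0$ there are at least four sectors with alternating signs of $u-u(x_0)$, at least four distinct nodal arcs leave $x_0$, and tracing them to the boundary yields at least four points of $\partial\Omega$ at which $u=u(x_0)$, arranged so that the sign of $u-u(x_0)$ on $\partial\Omega$ alternates as one traverses the boundary circle through these points.

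Finally I would convert this alternating sign pattern of $u-u(x_0)$ on $\partial\Omega$ into an alternating sign pattern for the Neumann datum $f$. On $\partial\Omega$ the outer normal derivative of $u$ is $f$, and on each boundary arc where $u>u(x_0)$ on one side and $u<u(x_0)$ on the other of a nodal endpoint, a Hopf-type / harmonic-measure argument ties the sign of $f$ to whether the level set locally pushes the potential up or down across the boundary; the four-fold alternation of $u-u(x_0)$ along $\partial\Omega$ therefore forces $f$ to change sign at least twice, so that the set $\{f\geq 0\}$ splits into at least two disjoint boundary arcs and is disconnected. This contradicts the assumed connectedness and completes the proof. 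The main obstacle I expect is this last step: carefully justifying that an interior level crossing of high multiplicity genuinely produces sign changes of the \emph{Neumann} data $f$ (rather than merely of $u$) on $\partial\Omega$, since the relationship between the interior nodal geometry and the boundary flux must be controlled quantitatively — this is exactly where the connectedness hypothesis~\req{Ass:Seo} is engineered to bite, and it is presumably where Seo's original argument invests the most care.
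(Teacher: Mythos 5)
First, a point of reference: the paper itself contains no proof of Theorem~\ref{Thm:Seo} --- it is quoted as Seo's auxiliary result, with the proof deferred to \cite{Seo96} --- so your attempt has to be measured against Seo's argument rather than anything in this text. Your first two steps do match the geometry of that argument: the growth bound forces $\nabla u(x_0)=0$ at an interior point of a phase, and at a vertex it forces $\beta_{11}[f]=\beta_{12}[f]=0$ because $\gamma_{11}<1<\gamma_{12}<3/2$ by \req{gamma}; in either case $u-u(x_0)$ has at least four alternating-sign sectors at $x_0$, and the nodal arcs emanating from $x_0$ can neither terminate in the interior nor close into loops (a loop would trap a region on whose boundary $u\equiv u(x_0)$, whence $u$ is constant there and, by unique continuation across the edges, constant everywhere, contradicting $f\neq 0$). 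Two cases are missing but repairable: you never treat $x_0$ lying on an open edge of $\partial\D$, where $u$ is Lipschitz but not $C^1$, and at a vertex the assertion that the surviving leading term of \req{BFI92} produces at least four sign changes of the angular function $y_{1j}$ is not automatic --- it requires an argument about the zeros of these transmission eigenfunctions.

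The genuine gap is your final step, and it is a missing idea rather than a technicality. There is no pointwise ``Hopf-type'' implication from the sign of $u-u(x_0)$ on a boundary arc to the sign of $f=\partial u/\partial\nu$ there: an arc on which $u-u(x_0)>0$ can carry $f<0$ at every point (the potential being large but decreasing outward), so the four-fold sign alternation of $u-u(x_0)$ along $\partial\Omega$ does not, as you state it, disconnect $\{f\geq 0\}$. The device that closes this gap --- and is in essence how Seo argues --- is to run the level-set argument on the stream function $v$, defined by $\nabla v=\sigma(\nabla u)^\perp$, instead of on $u$. Then $\partial_\tau v=f$ on $\partial\Omega$, so Assumption~\req{Ass:Seo} says precisely that $v|_{\partial\Omega}$ is monotone on one arc and on its complement, hence has a single maximum and a single minimum, hence attains every value at most twice on $\partial\Omega$; meanwhile $|\nabla v|=\sigma|\nabla u|$, so the growth hypothesis forces at least four level arcs of $v$ through $x_0$, which must reach the boundary, yielding at least four boundary points on one level --- an immediate contradiction. (Alternatively one can stay with $u$, but then the bridge to $f$ must go through the divergence theorem combined with Hopf's lemma applied to each sign component of $u-u(x_0)$, giving $\int f>0$ over the boundary trace of every positive component and $\int f<0$ over that of every negative one, plus a planar separation argument showing these traces alternate along $\partial\Omega$; none of this machinery appears in your sketch.) You correctly identified where the difficulty sits, but the proposal as written does not overcome it.
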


Based on Theorem~\ref{Thm:Seo} Seo showed that the corresponding operator
\be{Map:Seo}
   \Lambda_{f_1,f_2}\,:\,\D \,\mapsto\, 
   \bigl(\Lambda_{f_1}(\D),\Lambda_{f_2}(\D)\bigr)
\ee
is injective, i.e., the traces on $\partial\Omega$ of the two potentials
\req{conductivity-equation} corresponding to the boundary data $f=f_{1,2}$
uniquely determine an admissible polygonal inclusion.
Obviously, the operator $\Lambda_{f_1,f_2}$ of \req{Map:Seo} is also 
Fr\'echet differentiable with shape derivative
\bdm
   \partial\Lambda_{f_1,f_2}(\D)h \,=\,
   \bigl(\partial\Lambda_{f_1}(\D)h,\partial\Lambda_{f_2}(\D)h\bigr)
   \,\in\,{\cal L}(\S^2,(L^2_\diamond(\partial\Omega))^2)
\edm
for every $h\in\S^2$. 
In the sequel we investigate the linearized inverse problem.

\begin{theorem}
\label{Thm:Fprime-injective}
Let $\D$ be an admissible polygon,
and let the two piecewise continuous probing currents 
$f_1,f_2\in L^2_\diamond(\partial\Omega)$ satisfy Assumption~\req{Ass:Seo}.
Then $\partial\Lambda_{f_1,f_2}(\D)$ is injective.
\end{theorem}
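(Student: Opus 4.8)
The plan is to prove that the kernel of $\partial\Lambda_{f_1,f_2}(\D)$ is trivial by a local analysis at the vertices of $\D$, feeding the non-degeneracy supplied by Theorem~\ref{Thm:Seo} into the very last step. Suppose $h\in\S^2$ satisfies $\partial\Lambda_{f_1,f_2}(\D)h=0$. For each of the two currents $f\in\{f_1,f_2\}$ write $u$ for the corresponding potential and $u'$ for the solution of the transmission problem~\req{HeRu98}; the hypothesis means that $u'$ has vanishing Dirichlet trace on $\partial\Omega$, while by construction it also has vanishing Neumann trace there. Since $u'_+$ is harmonic in the connected set $\Omega\setminus\overline\D$ and has zero Cauchy data on $\partial\Omega$, unique continuation forces $u'_+\equiv 0$ there. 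In particular $u'_+$ and $\partial u'_+/\partial\nu$ vanish on $\partial\D$, so the two transmission conditions in~\req{HeRu98} collapse into prescribed Cauchy data for the interior field,
\[
   u'_-|_{\partial\D} = (k-1)(h\cdot\nu)\,\frac{\partial}{\partial\nu}u_-\,, \qquad
   \frac{\partial}{\partial\nu}u'_- = \frac{k-1}{k}\,\frac{\partial}{\partial\tau}\Bigl((h\cdot\nu)\frac{\partial}{\partial\tau}u\Bigr) \quad\text{on }\partial\D\,,
\]
where I use $k\neq 0,1$. Thus $u'_-$ is a single harmonic function on $\D$ whose Dirichlet and Neumann data are both pinned down explicitly in terms of $h$ and the traces of $u$; the overdetermination encoded in this pair of identities is the whole content that remains to be exploited.

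The core of the argument is a vertex-by-vertex examination of these overdetermined data. Near a vertex $x_i$ I would insert the asymptotic expansion~\req{BFI92} of $u$, together with the fact that $h\cdot\nu$ is piecewise affine with limits $d_i\cdot\nu_i$ and $d_i\cdot\nu_{i+1}$ on the two incident edges. Because $\gamma_{i1}<1$ by~\req{gamma}, the prescribed Dirichlet datum is singular of order $r^{\gamma_{i1}-1}$ and the prescribed Neumann datum of order $r^{\gamma_{i1}-2}$, with the next contributions governed by $\gamma_{i2}$. Matching these against the corner expansion of the harmonic function $u'_-$ on the sector of opening $\alpha_i$, one finds that the Cauchy data can belong to a genuine harmonic field only if certain compatibility relations hold at each singular order; at the leading order $r^{\gamma_{i1}-1}$ these relations are homogeneous and linear in the vertex data, of the schematic form $\beta_{i1}[f]\,\ell(d_i)=0$ with $\ell$ a nonzero linear functional determined by the geometry, and analogously $\beta_{i2}[f]\,\ell'(d_i)=0$ at the order $r^{\gamma_{i2}-1}$. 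The assumption that the interior angle $\alpha_i\neq\pi$ guarantees that the two edge normals $\nu_i,\nu_{i+1}$ are independent, so two independent functionals of $d_i$ suffice to conclude $d_i=0$.

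It is precisely here that Assumption~\req{Ass:Seo} enters through Theorem~\ref{Thm:Seo}. For any nontrivial combination $f=\mu_1f_1+\mu_2f_2$ the set $\{f\geq 0\}$ is connected, so by the theorem the potential $u=\mu_1u_1+\mu_2u_2$ can never satisfy the bound $|u(x)-u(x_i)|\leq C|x-x_i|^{3/2}$ at a vertex; in view of~\req{BFI92} and~\req{gamma} this says exactly that $(\beta_{i1}[f],\beta_{i2}[f])\neq(0,0)$ for every $(\mu_1,\mu_2)\neq 0$. Equivalently, at each vertex the $2\times 2$ matrix with rows $(\beta_{i1}[f_1],\beta_{i1}[f_2])$ and $(\beta_{i2}[f_1],\beta_{i2}[f_2])$ is nonsingular. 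Running the compatibility relations of the previous paragraph for both currents $f_1$ and $f_2$ and combining them through this invertibility eliminates the coefficients $\beta_{ij}[f_m]$ and leaves $\ell(d_i)=\ell'(d_i)=0$, hence $d_i=0$. Carrying this out at every vertex gives $h=0$, which is the desired injectivity.

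I expect the main obstacle to be the vertex singularity analysis of the middle paragraph: extracting clean, correctly normalized compatibility relations from the overdetermined Cauchy data requires controlling the full corner expansion of $u'_-$ (the admissible sector exponents, possible resonances when some $\gamma_{ij}-1$ coincides with a sector exponent, and the simultaneous matching of Dirichlet and Neumann traces), and verifying that the functionals $\ell,\ell'$ of $d_i$ are genuinely nonzero and independent whenever $\alpha_i\neq\pi$. Establishing that the shape derivative $u'$ has exactly the regularity needed to justify this term-by-term matching — a point that leans on the companion result~\cite{Hank24a} — is the technical heart of the proof.
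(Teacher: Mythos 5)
Your architecture coincides with the paper's proof: Holmgren's theorem forces $u'\equiv 0$ in $\Omega\setminus\overline\D$, the transmission conditions \req{HeRu98} then become overdetermined Cauchy data for the interior harmonic field, one matches asymptotics at a vertex using \req{BFI92}, and Assumption~\req{Ass:Seo} enters via Theorem~\ref{Thm:Seo} exactly as you describe --- your reformulation that the $2\times 2$ matrix $(\beta_{ij}[f_m])_{j,m=1,2}$ is nonsingular at every vertex is a clean equivalent of the contrapositive used in the paper, and your deduction that product-form relations $\beta_{ij}[f_m]\,\ell(d_i)=0$ for $m=1,2$ then force $\ell(d_i)=0$ is sound. (Two cosmetic discrepancies: matching Dirichlet \emph{and} Neumann traces yields two relations per singular order, so four functionals per vertex rather than two; and the paper works at a single vertex where $h\cdot\nu$ does not vanish and argues by contradiction, rather than proving $d_i=0$ at every vertex. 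Also, the resonance worry you raise is disposed of by \req{gamma}: the orders $\gamma_{i1}-1,\gamma_{i2}-1$ are separated from all other contributions, including those of the affine parts of $h\cdot\nu$.)

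The genuine gap is the nondegeneracy you defer to your final paragraph: that the functionals of $d_i$ arising at the orders $\gamma_{i1}$ and $\gamma_{i2}$ are jointly injective. This is not a technical verification to be filled in; it is the mathematical core of the theorem, and it is precisely equivalent to the paper's key lemma. In the paper's normalization ($h\cdot\nu\to 1$ on $\Gamma_2$ and $h\cdot\nu\to a$ on $\Gamma_1$ at the vertex, after relabeling edges if necessary), the matching relations are the homogeneous systems \req{LGS}, $M_j(a)\,(\beta_jA_j,\beta_jB_j)^T=0$; a nonzero $d_i$ annihilated by all of your functionals exists exactly when there is an $a$ for which $M_1(a)$ and $M_2(a)$ are \emph{simultaneously} singular. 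Ruling that out is where essentially all of the work lies: one needs the explicit sector solution \req{uprimeCauchy} of the local Cauchy problem, the reduction of $\det M_j(a)=0$ to $k(1+2a\cos\alpha_1+a^2)=(k+1)^2a\,s_js_j'$ (equation \req{aquad}), hence to $s_1s_1'=s_2s_2'$, and finally a sign analysis of $s_j=\sin\gamma_j\alpha_1$ and $s_j'=\sin(\gamma_j-1)\alpha_1$ that exploits the transcendental equation \req{gamma-cond} satisfied by the corner exponents, with separate cases $0<\alpha_1<\pi$ and $\pi<\alpha_1<2\pi$, to show that $s_1s_1'$ and $s_2s_2'$ have opposite signs. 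In particular, the independence of your $\ell$ and $\ell'$ does \emph{not} follow from $\alpha_i\neq\pi$ or from the linear independence of $\nu_i,\nu_{i+1}$, as your second paragraph suggests; it hinges on these fine properties of $\gamma_{i1},\gamma_{i2}$, i.e., on the specific transmission structure encoded in \req{gamma-cond} and \req{gamma}. Without this argument, what you have is an accurate outline of the paper's strategy, not a proof.
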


\begin{proof}
Assume that $\partial\Lambda_{f_1,f_2}(\D)h=0$ for some nontrivial $h\in\S^2$.
Since the two normal vectors $\nu_i$ and $\nu_{i+1}$ of any two
neighboring edges $\Gamma_i$ and $\Gamma_{i+1}$ of $\D$ form a basis of $\R^2$, 
and since $h\neq 0$, there is at least one vertex $x_i$ of $\D$,
where $h(x_i)\cdot\nu_i$ or $h(x_i)\cdot\nu_{i+1}$ is different from zero. 
Without loss of generality we can assume $x_1$ to be such a vertex, and that 
\be{varphi-local}
   h(x)\cdot\nu(x)
   \,=\, \begin{cases}
            1 + b_2|x-x_1|\,, & x \in \Gamma_2\,, \\
            a + b_1|x-x_1|\,, & x \in \Gamma_1\,,
         \end{cases}
\ee
for certain real parameters $a$, $b_1$, and $b_2$.

Consider now a fixed boundary current $f\in\Span\{f_1,f_2\}$.
Then 
\be{domderiv0}
   \partial\Lambda_f(\D)h \,=\, 0\,,
\ee
and by virtue of \req{domderiv0} the associated 
potential $u'$ of \req{HeRu98} has homogeneous Cauchy data on $\partial\Omega$.
According to Holmgren's theorem this implies that $u'=0$ in all of 
$\Omega\setminus\overline\D$. It therefore follows from \req{HeRu98} that
$u'|_\D$ is a harmonic function with Cauchy data
\begin{subequations}
\label{eq:uprime-noninjective}
\be{uprime-noninjective-a}
   u' \,=\, (k-1)\thsp(h\cdot\nu)\,\frac{\partial}{\partial\nu} u_-\\
\ee
and
\be{uprime-noninjective-b}
   \frac{\partial}{\partial\nu} u'
   \,=\, \frac{k-1}{k}\, \frac{\partial}{\partial\tau}
         \Bigl((h\cdot\nu) \frac{\partial}{\partial\tau}u\Bigr)\qquad 
\ee
\end{subequations}
on every edge $\Gamma_i$.

Since the series~\req{BFI92} can be differentiated termwise and infinitely
often with respect to $r$ and $\theta\in[0,\alpha_i]$, compare~\cite{BFI92}, 
we conclude from \req{uprime-noninjective} that $u'$ and its 
Neumann derivative admit the following
series expansions for $x\in\Gamma_2$ close to $x_1$ in the associated local
coordinate system (for ease of simplicity we omit the index $i=1$ in all
terms of \req{BFI92}):
\begin{subequations}
\label{eq:Cauchy}
\begin{align}
\nonumber
   u'(x)
   &\,=\, -(k-1)(1+b_2r)\!\left.\frac{1}{r}
          \frac{\partial}{\partial\theta} u_-(x)\right|_{\theta=0}
   \,=\, (1-k)(1+b_2r)\sum_{j=1}^\infty \beta_jy_j'(0)r^{\gamma_j-1}\\[1ex]
\label{eq:Cauchy-a}
   &\,=\, (1-k)\sum_{j=1}^\infty \beta_jy_j'(0) r^{\gamma_j-1}
          \,+\, (1-k) b_2 \sum_{j=1}^\infty \beta_jy_j'(0) r^{\gamma_j}
\intertext{and}
\nonumber
   \frac{\partial}{\partial\nu} u'(x)
   &\,=\, \frac{k-1}{k}\!\left.\frac{\partial}{\partial r}
          \Bigl((1+b_2r)\frac{\partial}{\partial r} u(x)\Big)\right|_{\theta=0}
          \\[1ex]
\label{eq:Cauchy-b}
   &\,=\, \frac{k-1}{k}\,
          \sum_{j=1}^\infty \beta_j\gamma_j(\gamma_j-1)y_j(0)r^{\gamma_j-2}
          \,+\,
          \frac{k-1}{k}\,b_2
          \sum_{j=1}^\infty \beta_j\gamma_j^2y_j(0)r^{\gamma_j-1}\,.
\end{align}
\end{subequations}
Since the right-hand sides of \req{Cauchy-a} and \req{Cauchy-b} are analytic
functions of $0<r<r_0$, the local Cauchy problem~\req{uprime-noninjective}
on this portion of $\Gamma_2$ has a unique harmonic solution, 
which can be written down explicitly in the same coordinates, i.e.,
\be{uprimeCauchy}
\begin{aligned}
   u'(x)
   &\,=\, \frac{1-k}{k} \sum_{j=1}^\infty
          \Bigl(k \beta_jy_j'(0) \cos(\gamma_j-1)\theta \,+\,
                \beta_j\gamma_j y_j(0)\sin(\gamma_j-1)\theta\Bigr) r^{\gamma_j-1}
          \\[1ex]
   &\phantom{\,=\,}\ +\,
          \frac{1-k}{k} \,b_2 \sum_{j=1}^\infty
          \Bigl(k \beta_jy_j'(0) \cos\gamma_j\theta \,+\,
                \beta_j\gamma_j y_j(0)\sin\gamma_j\theta\Bigr) r^{\gamma_j}
\end{aligned}
\ee
for $0\leq\theta\leq\alpha_1$ and $0<r<r_0$;
the validity of \req{uprimeCauchy} can be checked by using the fact that
\bdm
   \left.\frac{\partial}{\partial\nu}u'(x)\right|_{\Gamma_2}
   \,=\, -\!\left.
          \frac{1}{r}\frac{\partial}{\partial\theta} u_-(x)
          \right|_{\theta=0}\,.
\edm

In particular, when $\theta=\alpha_1$ it follows from \req{gamma}
that for $x\in\Gamma_1$ we have
\bdmal
   u'(x)
   &\,=\, \frac{1-k}{k}
          \Bigl(
            k\beta_1y_1'(0)\cos(\gamma_1-1)\alpha_1 
            \,+\, \beta_1\gamma_1y_1(0)\sin(\gamma_1-1)\alpha_1
          \Bigr) r^{\gamma_1-1}\\[1ex]
   &\phantom{\,=\,}\ +\,
          \frac{1-k}{k}
          \Bigl(
            k\beta_2y_2'(0)\cos(\gamma_2-1)\alpha_1
            \,+\, \beta_2\gamma_2y_2(0)\sin(\gamma_2-1)\alpha_1
          \Bigr) r^{\gamma_2-1}\\[1.5ex]
   &\phantom{\,=\,}\ +\, O(r^{1/2})
\edmal
as $r=|x-x_1|\to 0$, 
while at the same time, according to \req{uprime-noninjective-a},
\req{varphi-local}, and \req{BFI92},
\bdmal
   u'(x)
   &\,=\, (k-1) (a+b_1r)\!\left.
           \frac{1}{r}\frac{\partial}{\partial\theta} u_-(x)
          \right|_{\theta=\alpha_1}
    \!\!=\, 
          (k-1) (a+b_1r) \sum_{j=1}^\infty\beta_jy_j'(\alpha_1) r^{\gamma_j-1}\\[1ex]
   &\,=\, (k-1) a\beta_1y_1'(\alpha_1)r^{\gamma_1-1}
          \,+\, (k-1)a\beta_2y_2'(\alpha_1)r^{\gamma_2-1}
          \,+\, O(r^{1/2})
\edmal
for the same boundary points $x\in\Gamma_1$.
A comparison of the leading order terms thus yields the two equations
\be{condition1-tmp}
   ka\beta_jy_j'(\alpha_1) 
   \,=\, -\beta_j
          \bigl(k y_j'(0)\cos(\gamma_j-1)\alpha_1
                \,+\, \gamma_jy_j(0)\sin(\gamma_j-1)\alpha_1\bigr)\,, 
\ee
$j=1,2$.
Now we recall that
\be{y12}
   y_j(\theta) \,=\, A_j\cos\gamma_j\theta \,+\, B_j\sin\gamma_j\theta
   \qquad \text{for $\theta\in [0,\alpha_1]$}
\ee
and $j=1,2$, with certain coefficients $A_j,B_j\in\R$ with $A_j^2+B_j^2\neq 0$.
Inserting this into \req{condition1-tmp} we arrive at
\be{condition1}
   k(ac_j + c_j') \beta_jB_j  \,=\, (kas_j - s_j')\beta_jA_j \,, \qquad j=1,2\,,
\ee
where we have introduced the abbreviations
\bdm
   c_j = \cos\gamma_j\alpha_1\,, \quad s_j = \sin\gamma_j\alpha_1\,, \quad
   c_j' = \cos(\gamma_j-1)\alpha_1\,, \quad
   s_j' = \sin(\gamma_j-1)\alpha_1
\edm
for general $j\in\N$.

Likewise we can use \req{uprimeCauchy} to evaluate the Neumann derivative
of $u'$ on $\Gamma_1$ near $x_1$, which gives
\begin{align}
\nonumber
   \frac{\partial}{\partial\nu} u'(x)
   &\,=\, \left.
          \frac{1}{r}\frac{\partial}{\partial\theta} u'(x)\right|_{\theta=\alpha_1}
          \\[1ex]
\nonumber
   &\,=\, \frac{1-k}{k} \sum_{j=1}^\infty (\gamma_j-1)
          \Bigl(
             \beta_j\gamma_jy_j(0)c_j' \,-\, k\beta_jy_j'(0)s_j'
          \Bigr) r^{\gamma_j-2}\\[1ex]
\nonumber
   &\qquad \qquad 
          \,+\, \frac{1-k}{k}\, b_2 \sum_{j=1}^\infty\gamma_j
          \Bigl(\beta_j\gamma_jy_j(0)c_j\,-\, k\beta_jy_j'(0)s_j\Bigr) 
          r^{\gamma_j-1}\\[1ex]
\nonumber
   &\,=\, \frac{1-k}{k}(\gamma_1-1)
           \Bigl(\beta_1\gamma_1y_1(0)c_1' \,-\, k\beta_1y_1'(0)s_1'\Bigr) 
           r^{\gamma_1-2}\\[1ex]
\nonumber
   &\qquad \qquad
           \,+\,
           \frac{1-k}{k}(\gamma_2-1)
           \Bigl(\beta_2\gamma_2y_2(0)c_2'\,-\, k\beta_2y_2'(0)s_2'\Bigr) 
           r^{\gamma_2-2} \,+\, O(r^{-1/2})
\intertext{for $r\to 0$, and compare this with \req{uprime-noninjective-b}:}
\nonumber
   \frac{\partial}{\partial\nu} u'(x)
   &\,=\, \frac{k-1}{k}
          \left.
          \frac{\partial}{\partial r}
             \Bigl((a+b_1r)\frac{\partial}{\partial r} u(x)\Bigr)
          \right|_{\theta=\alpha_1}\\[1ex]
\nonumber
   &\,=\, \frac{k-1}{k}\,\gamma_1(\gamma_1-1)a\beta_1y_1(\alpha_1)r^{\gamma_1-2}
          \,+\, \frac{k-1}{k}\,\gamma_2(\gamma_2-1)a\beta_2y_2(\alpha_1)
                r^{\gamma_2-2}\\[1ex]
\nonumber
   &\phantom{\,=\,}\ +\, O(r^{-1/2})\,.
\end{align}
Inserting~\req{y12} we thus obtain a second pair of equations,
\be{condition2}
   (ks'_j - as_j)\beta_jB_j \,=\, (ac_j+c'_j)\beta_jA_j\,, \qquad j=1,2\,.
\ee

The four equations in \req{condition1}, \req{condition2} can be rearranged
in two homogeneous linear systems
\be{LGS}
   M_j \begin{cmatrix} \beta_j A_j \\ \beta_j B_j \end{cmatrix}
   \,=\,
   \begin{cmatrix}
      kas_j - s_j' & -k(ac_j + c_j')\\
      ac_j + c_j' & as_j - ks_j'
   \end{cmatrix}
   \begin{cmatrix} \beta_j A_j \\ \beta_j B_j \end{cmatrix}
   \,=\,
   \begin{cmatrix} 0 \\ 0 \end{cmatrix}, \qquad j=1,2\,.
\ee
As mentioned before, the entries of the two matrices $M_j$
only depend on the geometry of the problem and not on the probing current. 
The probing current $f$ only enters into \req{LGS} via the coefficients 
$\beta_1=\beta_1[f]$ and $\beta_2=\beta_2[f]$. Further, since 
$A_j^2+B_j^2 \neq 0$ for $j=1,2$, it follows that $\beta_j[f]=0$ for 
\emph{every} probing current $f\in\Span\{f_1,f_2\}$,
if the matrix $M_j$ happens to be nonsingular.

Let us therefore make the assumption that both matrices $M_1$ and $M_2$ 
are singular. Then we must have
\bdmal
   0 &\,=\, (kas_j-s_j')(as_j-ks_j') \,+\, k(ac_j+c_j')^2\\[1ex]
     &\,=\, k(1+2ac_jc_j'+a^2) \,-\, (k^2+1)as_js_j'\,, \qquad j=1,2\,,
\edmal
because $c_j^2+s_j^2=c_j'{}^2+s_j'{}^2=1$. Since
\bdmal
   c_jc_j'
   &\,=\, c_j\cos(\gamma_j-1)\alpha_1
    \,=\ c_j(\cos\gamma_j\alpha_1\cos\alpha_1
             \,+\, \sin\gamma_j\alpha_1\sin\alpha_1)\\[1ex]
   &\,=\, c_j^2\cos\alpha_1 \,+\, c_js_j\sin\alpha_1
    \,=\, \cos\alpha_1 \,-\, s_j(s_j\cos\alpha_1 - c_j\sin\alpha_1)\\[1ex]          
   &\,=\, \cos\alpha_1 \,-\, s_j\sin(\gamma_j-1)\alpha_1
    \,=\, \cos\alpha_1 \,-\, s_js_j'\,, 
\edmal
the previous equations can be rewritten as
\be{aquad}
   0 \,=\, k(1+2a\cos\alpha_1+a^2) \,-\, (k+1)^2as_js_j'\,, \qquad j=1,2\,.
\ee
From this we immediately deduce that $a$ must be different from zero 
in this case. Accordingly, as \req{aquad} is bound to hold for $j=1$ and 
$j=2$ simultaneously, we can subtract these two equations, and conclude that
\be{s1s1s2s2}
   s_1s_1' \,=\, s_2s_2'\,.
\ee

To obtain a contradiction we turn to \req{gamma-cond} and 
Figure~\ref{Fig:sines} and distinguish two cases.
If $0<\alpha_1<\pi$, then the appropriate instances of \req{gamma-cond} take 
the form
\bdm
   \sin\gamma_1(\pi-\alpha_1) \,=\, \lambda\sin\gamma_1\pi\,,
   \qquad
   \sin\gamma_2(\pi-\alpha_1) \,=\, -\lambda\sin\gamma_2\pi\,,
\edm
which can be rewritten with the help of the angle sum formula as
\begin{subequations}
\label{eq:s1s2-i}
\begin{align}
\label{eq:s1-i}
   (c_1-\lambda)\sin\gamma_1\pi &\,=\, s_1\cos\gamma_1\pi\,, \\
\label{eq:s2-i}
   (c_2+\lambda)\sin\gamma_2\pi &\,=\, s_2\cos\gamma_2\pi\,.
\end{align}
\end{subequations}
Since $\lambda>1$ and $\pi/2<\gamma_1\pi<\pi$ by virtue of \req{gamma}, 
the left-hand side of \req{s1-i} is negative, and so is $\cos\gamma_1\pi$.
Likewise, since $\pi<\gamma_2\pi<3\pi/2$, the left-hand side
of \req{s2-i} is negative, and again, so is $\cos\gamma_2\pi$. Therefore,
we conclude from \req{s1s2-i} that
\be{s1s2sign}
   s_1>0 \qquad \text{and} \qquad s_2>0\,.
\ee
On the other hand, $-1/2<\gamma_1-1<0$, and therefore
$(\gamma_1-1)\alpha_1\in(-\pi/2,0)$ in this first case. This shows that
\bdm
   s_1' \,=\, \sin(\gamma_1-1)\alpha_1 \,<\, 0\,,
\edm
whereas
\bdm
   s_2' \,=\, \sin(\gamma_2-1)\alpha_1 \,>\, 0\,,
\edm
because $0<\gamma_2-1<1/2$.
Together with \req{s1s2sign} this contradicts \req{s1s1s2s2} in the case,
where $0<\alpha_1<\pi$.

In the other case, where $\pi<\alpha_1<2\pi$, \req{gamma-cond} implies that
\bdm
   \sin\gamma_1(\alpha_1-\pi) \,=\, \lambda\sin\gamma_1\pi\,,
   \qquad
   \sin\gamma_2(\alpha_1-\pi) \,=\, -\lambda\sin\gamma_2\pi\,,
\edm
and this yields
\bdmal
   (c_1+\lambda)\sin\gamma_1\pi &\,=\, s_1\cos\gamma_1\pi\,, \\
   (c_2-\lambda)\sin\gamma_2\pi &\,=\, s_2\cos\gamma_2\pi\,.
\edmal
This shows that
\bdm
   \phantom{'}s_1 < 0 \qquad \text{and} \qquad s_2 < 0\,,
\edm
while
\bdm
   s_1' < 0 \qquad \text{and} \qquad s_2'>0
\edm
in this case, because $-\pi<(\gamma_1-1)\alpha_1<0$ and 
$0<(\gamma_2-1)\alpha_1<\pi$, which again contradicts \req{s1s1s2s2}.

We thus have brought our assumption, that both matrices $M_1$ and $M_2$
are singular, to a contradiction. But if $M_1$ is nonsingular, then
\begin{align}
\nonumber
   \beta_1[\mu_1f_1+\mu_2f_2] &\,=\, 0 \qquad 
   \text{for every $\mu_1,\mu_2\in\R$}\,,
\intertext{while we can enforce}
\nonumber
   \beta_2[\mu_1f_1+\mu_2f_2] &\,=\, \mu_1\beta_2[f_1]+\mu_2\beta_2[f_2] \,=\, 0
\intertext{by an appropriate nontrivial choice of $\mu_1,\mu_2\in\R$.
Likewise, if $M_2$ is nonsingular, then}
\nonumber
   \beta_2[\mu_1f_1+\mu_2f_2] &\,=\, 0 \qquad 
   \text{for every $\mu_1,\mu_2\in\R$}
\intertext{and}
\nonumber
   \beta_1[\mu_1f_1+\mu_2f_2] &\,=\, \mu_1\beta_1[f_1]+\mu_2\beta_1[f_2]
   \,=\, 0\,,
\end{align}
if the nontrivial coefficients $\mu_1$ and $\mu_2$ are chosen appropriately.
Therefore, in either case we can find a probing current 
$f\in\Span\{f_1,f_2\}\setminus\{0\}$,
such that the two leading expansion coefficients $\beta_1=\beta_1[f]$ and 
$\beta_2=\beta_2[f]$ of the corresponding electric potential $u$ are both 
vanishing near the vertex $x_1$.

But then it follows from \req{gamma} that Seo's Assumption~\req{Ass:Seo}
concerning the choice of the two probing currents $f_1$ and $f_2$
is in contradiction to Theorem~\ref{Thm:Seo}. Thus we have proved that
the null space of $\partial\Lambda_{f_1,f_2}(\D)$ is trivial, i.e., 
that $\partial\Lambda_{f_1,f_2}(\D)$ is injective.
\end{proof}

\section{Lipschitz stability for a conductive polygonal inclusion}
\label{Sec:Lipschitz}
The idea of obtaining Lipschitz stability for Seo's inverse problem originates
from the fact that each admissible polygon $\D$ with $n$ vertices
can be described by a $2n$-dimensional vector
\be{x}
   \xx \,=\, [x_1,\dots,x_n] \,\in\,(\R^2)^n
\ee
with the coordinates of its vertices in counterclockwise ordering. 
However, since we have the freedom of choosing any vertex of $\D$ for $x_1$,
this vector is not uniquely specified. 

To account for this problem we introduce the following metric in the set of
all admissible polygons with $n$ vertices 
(which corresponds to a pseudometric in $(\R^2)^n$):
If $\D'$ is a second admissible polygon with $n$ vertices $x_i'$ in 
counterclockwise order, let
\be{metric}
   d(\D,\D') \,=\, \min_{j=0,\dots,n-1} \max_{i=1,\dots,n} |x_{i+j}-x_i'|\,.
\ee
Clearly, $d$ is nonnegative and symmetric; 
it vanishes, if and only if $\D=\D'$. 
To see that $d$ also satisfies the triangle inequality, let $\D''$
be a third admissible polygon with vertices $x_i''$, $i=1,\dots,n$.
Without loss of generality we can assume that the enumeration
of the vertices of $\D$ and $\D'$ is chosen in such a way that
\begin{align}
\nonumber
   d(\D'',\D) \,=\, \max_{i=1,\dots,n} |x_i''-x_i|\,, 
\intertext{and likewise, that}
\nonumber
   d(\D'',\D') \,=\, \max_{i=1,\dots,n} |x_i''-x_i'|\,, 
\end{align}
Then we readily conclude that
\bdmal
   d(\D,\D') 
   &\,\leq\, \max_{i=1,\dots,n} |x_i-x_i'| 
    \,\leq\, \max_{i=1,\dots,n} 
                \bigl(|x_i-x_i''| \,+\, |x_i''-x_i'|\bigr)\\[1ex]
   &\,\leq\, \max_{i=1,\dots,n} |x_i-x_i''| 
             \,+\, \max_{j=1,\dots,n} |x_j''-x_j'| 
    \,=\, d(\D,\D'') \,+\, d(\D'',\D')\,.
\edmal

Take note that $d(\D,\D')$ majorizes the Hausdorff distance 
$d_H(\partial\D,\partial\D')$ between $\partial\D$ and $\partial\D'$. 
For, if $x\in\partial\D$ then there is
some vertex $x_i$ of $\D$ and $c\in[0,1)$, such that
\bdm
   x \,=\, cx_i \,+\, (1-c) x_{i+1}\,;
\edm
assuming further without loss of generality that the minimum in \req{metric}
is attained for $j=0$, then it follows that
\bdm
   \bigl| x \,-\, (cx_i' + (1-c)x_{i+1}')\bigr|
   \,\leq\, c\, |x_i-x_i'| \,+\, (1-c)\,|x_{i+1}-x_{i+1}'|
   \,\leq\, d(\D,\D')\,.
\edm
Since $cx_i'+(1-c)x_{i+1}'\in\partial\D'$ this shows that the distance
between any $x\in\partial\D$ and $\partial\D'$ is at most $d(\D,\D')$,
and hence,
\be{Hausdorff}
   d_H(\partial\D,\partial\D') \,\leq\, d(\D,\D')\,.
\ee

Another useful ingredient to achieve Lipschitz stability is compactness.
Therefore, following Beretta and Francini~\cite{BeFr22}, we define for a
given $\delta>0$ the union $\A_{n,\delta}$ of all admissible polygons
with $n$ vertices, such that
\begin{itemize}
\item[(i)] $|x_i-x|\geq \delta$ for every $i=1,\dots,n$ and every 
$x\in\Gamma_j$ with $j\notin\{i,i+1\}$\,;
\item[(ii)] $\delta\leq \alpha_i \leq 2\pi-\delta$ and 
$|\alpha-\pi|\geq \delta$ for every $i=1,\dots,n$\,;
\item[(iii)] $|x-y|\geq \delta$ for every $x\in\partial\D$ and every 
$y\in\partial\Omega$\,.
\end{itemize}
We denote by $\X_{n,\delta}\subset(\R^2)^n$ the set of coordinate vectors~\req{x},
which describe some $\D\in\A_{n,\delta}$, and we emphasize that $\X_{n,\delta}$ is 
compact. Further, $\X_{n,\delta}\subset\X_{n,\delta/2}$, and when $(\R^2)^n$ 
is equipped with the norm
\be{norm-d}
   \norm{\dd} \,=\, \max_{i=1,\dots,n} |d_i| \qquad 
   \text{for every \ $\dd=[d_1,\dots,d_n]$}\,,
\ee
then there is an open set $\U_{n,\delta}\subset(\R^2)^n$, such that
\bdm
   \X_{n,\delta} \,\subset\, \U_{n,\delta}\,\subset\, \X_{n,\delta/2}\,.
\edm

In $\U_{n,\delta}$ we define 
\be{F}
   F:\begin{cases}
        \phantom{\xx\!\!}\U_{n,\delta} \to\, (L^2_\diamond(\partial\Omega))^2\,,\\
        \phantom{\U_{n,\delta}\!\!\!}\xx \,\mapsto\, \Lambda_{f_1,f_2}(\D)\,,
     \end{cases}
\ee
where $\D\in\A_{n,\delta}$ is the polygon associated with $\xx$.
Further, for the same pair of $\xx$ and $\D$ we introduce
\bdm
   S_{\xx}:\begin{cases}
                  \phantom{\dd\!\!}(\R^2)^n \to\, \S^2\,, \\
                  \phantom{(\R^2)^n\!\!\!}\dd \,\mapsto\, h\,,
               \end{cases}
\edm
where for $\dd=[d_1,\dots,d_n]$ the vector field $h\in\S^2$ is defined by 
piecewise linear interpolation:
\be{interpol}
   h(x_i) \,=\, d_i\,, \qquad i=1,\dots,n\,.
\ee
With the norms introduced in \req{norm-h} and \req{norm-d} $S_\xx$ 
is an isometry.

\begin{proposition}
\label{Prop:Frechet}
The operator $F$ of \req{F} belongs to 
$C^1\bigl(\U_{n,\delta},(L^2_\diamond(\Omega))^2\bigr)$, 
and for $\xx\in\X_{n,\delta}$ and the associated $\D\in\A_{n,\delta}$ 
its derivative is given by
\be{Prop:Frechet}
   F'(\xx)\dd \,=\, \partial\Lambda_{f_1,f_2}(\D)S_\xx\dd\,, \qquad
   \dd\in(\R^2)^n\,.
\ee
\end{proposition}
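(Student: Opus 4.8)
The plan is to view $F$ as the composition of the parametrization that assigns to a coordinate vector $\xx$ the admissible polygon $\D$ with these vertices, followed by Seo's forward map $\Lambda_{f_1,f_2}$, and to combine the shape differentiability from~\cite{BFV17} with the isometry property of $S_\xx$. Fix $\xx\in\U_{n,\delta}$ with associated polygon $\D\in\A_{n,\delta/2}$; since $\U_{n,\delta}$ is open, a sufficiently small perturbation $\dd\in(\R^2)^n$ keeps $\xx+\dd$ in $\U_{n,\delta}$, and the polygon it describes arises from $\D$ by moving each vertex $x_i$ to $x_i+d_i$, i.e.\ by deforming $\partial\D$ with the piecewise linear field $h=S_\xx\dd$ of~\req{interpol}. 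Hence $F(\xx+\dd)=\Lambda_{f_1,f_2}$ evaluated at this deformed polygon, so the Fr\'echet differentiability of $\Lambda_{f_1,f_2}$ in the shape direction $h\in\S^2$, established in~\cite{BFV17}, yields
\bdm
   F(\xx+\dd) \,=\, F(\xx) \,+\, \partial\Lambda_{f_1,f_2}(\D)S_\xx\dd
              \,+\, o(\norm{S_\xx\dd})\,.
\edm
Because $S_\xx$ is an isometry for the norms~\req{norm-h} and~\req{norm-d}, the remainder is $o(\norm{\dd})$, which establishes the pointwise Fr\'echet differentiability of $F$ at every $\xx\in\U_{n,\delta}$ together with the formula~\req{Prop:Frechet}.

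It then remains to prove the $C^1$ property, that is, continuity of $\xx\mapsto F'(\xx)$ into the fixed operator space ${\cal L}((\R^2)^n,(L^2_\diamond(\partial\Omega))^2)$. Here I would work directly with the composition rather than with $S_\xx$ in isolation, since the spline space $\S^2$ itself varies with $\xx$ whereas the composite operator lives in a fixed space. Using the explicit representation~\req{HeRu98}, the two components of $F'(\xx)\dd$ are the traces on $\partial\Omega$ of the solutions $u'$ of the inhomogeneous transmission problem on $\D$ for $f=f_1,f_2$, whose data are assembled from the geometry of $\D$ (edges, unit normals and tangents), from the forward potential $u=u_\xx$ and its boundary derivatives, and linearly from $h=S_\xx\dd$. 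The strategy is to transport this problem to a fixed reference polygon by a family of domain maps depending smoothly on $\xx$, turning it into a transmission problem on a fixed domain with $\xx$-dependent coefficients and right-hand side, into which $\dd$ enters linearly and boundedly. Continuous dependence of $u_\xx$ on $\xx$ together with continuity of the solution operator of the pulled-back problem then gives continuity of $F'$ in operator norm.

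The principal obstacle lies in the corner behaviour. By~\req{gamma} the derivatives of $u$ entering~\req{HeRu98} grow like $r^{\gamma_{i1}-1}$ with $\gamma_{i1}<1$ near the vertices, and $h\cdot\nu$ is discontinuous there, so the transmission data are genuinely singular and the required continuous dependence cannot be read off in ordinary Sobolev spaces; it must be quantified in weighted norms adapted to the corner asymptotics~\req{BFI92}. This is precisely the regularity and stability analysis carried out for admissible polygons in the companion paper~\cite{Hank24a}, on which I would rely to conclude that the shape derivative $\partial\Lambda_{f_1,f_2}(\D)$, and hence $F'$, depends continuously on the underlying polygon, completing the proof.
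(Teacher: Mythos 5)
Your derivation of the formula \req{Prop:Frechet} and of pointwise Fr\'echet differentiability --- composing the shape differentiability of $\Lambda_{f_1,f_2}$ with the isometry $S_\xx$ --- is sound and matches the paper's first step (the paper uses the quantitative remainder bound $C\norm{h}^2$ quoted from \cite{Hank24a} rather than the qualitative $o(\norm{S_\xx\dd})$ from \cite{BFV17}, a distinction that matters below). The gap is in the $C^1$ part. There, the entire content of the claim is the operator-norm continuity of $\xx\mapsto F'(\xx)=\partial\Lambda_{f_1,f_2}(\D)S_\xx$, and your argument for it --- pull the transmission problem \req{HeRu98} back to a reference polygon, control the corner singularities in weighted norms, and invoke ``the regularity and stability analysis carried out in \cite{Hank24a}'' --- ultimately reduces to the assertion that the companion paper already contains a continuous-dependence result for the shape derivative with respect to the underlying polygon. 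No such result is quoted anywhere in the paper, and the way the paper structures its proof strongly suggests that none is available in that form: carrying out your program would require a genuinely new stability analysis of the singular transmission problem (with angle-dependent exponents $\gamma_{ij}$ and data blowing up like $r^{\gamma_{i1}-1}$ at the vertices, cf.~\req{BFI92} and \req{gamma}), which your proposal acknowledges but does not perform.

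The paper's own route is worth noting because it sidesteps this difficulty entirely. It quotes only two facts from \cite{Hank24a}: (i) the Taylor remainder bound $\norm{F(\xx'+\dd)-F(\xx')-F'(\xx')\dd}\leq C\norm{\dd}^2$ holds with a constant \emph{uniform} for all $\xx'$ in a neighborhood of $\xx$ (this is \req{Taylorrest}; a merely pointwise $o(\norm{\dd})$ remainder, as delivered by your first step, would not suffice here), and (ii) $\norm{F'(\xx')}$ is uniformly bounded for $\xx'$ near $\xx$, whence $F$ is locally Lipschitz with constant $L$, cf.~\req{F-Lipschitz}. From these two ingredients continuity of $F'$ follows by an elementary difference-quotient argument: for a unit vector $\dd$ and small $t>0$ one inserts the difference quotients of $F$ at $\xx$ and $\xx'$ and estimates $\norm{F'(\xx')\dd-F'(\xx)\dd}\leq 2Ct+(2L/t)\norm{\xx'-\xx}$, and the choice $t=\norm{\xx'-\xx}^{1/2}$ even yields H\"older-$1/2$ continuity of $F'$ in operator norm, uniformly in $\dd$. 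To repair your proof, replace the unverified continuity claim in your last paragraph by this argument; it requires nothing beyond the locally uniform quadratic remainder and the local bound on the derivative.
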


\begin{proof}
Let $\xx\in\U_{n,\delta}$ and $\dd\in(\R^2)^n$ be so small that $\xx+\dd$ 
also belongs to $\U_{n,\delta}$. Let $\D$ and $\D'$ be the polygons associated
with $\xx$ and $\xx+\dd$, respectively. 
Then, for $F'$ defined as in \req{Prop:Frechet} there holds
\be{Taylorrest-tmp}
\begin{aligned}
   &\norm{F(\xx+\dd) \,-\, F(\xx) \,-\, F'(\xx)\dd}_{(L^2(\partial\Omega))^2} 
    \,=\, \\[1ex]
   &\qquad
    \norm{\Lambda_{f_1,f_2}(\D') \,-\, \Lambda_{f_1,f_2}(\D)
          \,-\, \partial\Lambda_{f_1,f_2}(\D)h}_{(L^2(\partial\Omega))^2}\,,
\end{aligned}
\ee
where $h\in\S^2$ is defined as in \req{interpol}.
As shown in \cite{Hank24a} the right-hand side of \req{Taylorrest-tmp}
can be bounded by $C \norm{h}^2$ for $h$ sufficiently small, i.e.,
for $\dd$ sufficiently small.
The constant $C$ depends on the conductivity $k$
and on the probing currents $f_{1,2}$, 
and also on $\Omega$ and on $\D$,
but this constant can be chosen in such a way that
\be{Taylorrest}
   \norm{F(\xx'+\dd) \,-\, F(\xx') \,-\, F'(\xx')\dd}_{(L^2(\partial\Omega))^2} 
   \,\leq\, C\norm{\dd}^2
\ee
holds true for $\dd$ sufficiently small and all $\xx'$ 
within a certain neighborhood of $\xx$. 
This proves that $F$ is differentiable in $\U_{n,\delta}$. 

To show that $F$ is $C^1$ we let $\xx$ and $\D$ be defined as before,
and we quote from \cite{Hank24a} that
\bdm
   \norm{F'(\xx')}_{{\cal L}((\R^2)^n,(L^2(\partial\Omega))^2)}
   \,=\, \norm{\partial\Lambda_{f_1,f_2}(\D')}_{{\cal L}(\Sp^2,(L^2(\partial\Omega))^2)}
\edm
is uniformly bounded for all $\xx'$ sufficiently close to $\xx$ 
and the associated polygons $\D'$. From this it follows immediately that
\bdm
   \norm{F'(\xx+\dd)}_{{\cal L}((\R^2)^n,(L^2(\partial\Omega))^2)} \,\leq\, L
\edm
for some $L>0$, provided $\dd$ is sufficiently small. Accordingly, 
if $\xx'\in\U_{n,\delta}$ is sufficiently close to $\xx$ then we can estimate
\be{F-Lipschitz}
\begin{aligned}
   \norm{F(\xx')-F(\xx)}_{(L^2(\partial\Omega))^2}
   &\,\leq\, \int_0^1 
               \norm{F'(\xx+t(\xx'-\xx)\bigr)(\xx'-\xx)}_{(L^2(\partial\Omega))^2} 
             \dt\\[1ex]
   &\,\leq\, \int_0^1 L\,\norm{\xx'-\xx} \dt \,=\, L\,\norm{\xx'-\xx}\,.
\end{aligned}
\ee
Now let $\dd$ be an arbitrary unit vector in $(\R^2)^n$. 
Then \req{Taylorrest} and \req{F-Lipschitz} imply that
\bdmal
   & \norm{F'(\xx')\dd - F'(\xx)\dd}_{(L^2(\partial\Omega))^2} 
     \,\leq\, \Bigl\|F'(\xx')\dd
                   \,-\,\frac{1}{t}\bigl(F(\xx'+t\dd)-F(\xx')\bigr)
              \Bigr\|_{(L^2(\partial\Omega))^2}\\[1ex]
   & \qquad \phantom{\,\leq}\
     \,+\, \Bigl\|\frac{1}{t}\bigl(F(\xx+t\dd)-F(\xx)\bigr)
                  \,-\,F'(\xx)\dd\,
           \Bigr\|_{(L^2(\partial\Omega))^2}
     \\[1ex]
   & \qquad \phantom{\,\leq}\
     \,+\, \frac{1}{t}\,\bigl\| F(\xx'+t\dd)-F(\xx+t\dd) 
                        \bigr\|_{(L^2(\partial\Omega))^2}
     \,+\, \frac{1}{t}\,\bigl\| F(\xx')-F(\xx) 
                        \bigr\|_{(L^2(\partial\Omega))^2} \\[2ex]
   & \qquad 
     \,\leq\, 2C t\,\norm{\dd}^2
     \,+\, \frac{2L}{t}\,\bigl\| \xx'-\xx \bigr\|\,,
\edmal
provided that $t>0$ is sufficiently small. 
In particular, for $t=\norm{\xx'-\xx}^{1/2}$ we obtain
\bdm
   \norm{F'(\xx')\dd - F'(\xx)\dd}_{(L^2(\partial\Omega))^2} 
   \,\leq\, (2C+2L) \norm{\xx'-\xx}^{1/2}\,,
\edm
independent of the particular choice of $\dd$. 
This shows that $F\in C^1(\U_{n,\delta},(L^2_\diamond(\Omega))^2)$.
\end{proof}

Now we can formulate our inverse Lipschitz stability result.

\begin{theorem}
\label{Thm:Lipschitz}
Let $\delta>0$ and $\A_{n,\delta}$ be defined as above. Further, let the
two piecewise constant probing currents $f_1,f_2\in L^2_\diamond(\partial\Omega)$
fulfill Seo's Assumption~\req{Ass:Seo}.
Then there is a Lipschitz constant $\ell_{n,\delta}$, depending only on 
$n$, $\delta$, $k$, $\Omega$, and $f_{1,2}$, such that
\be{Thm:Lipschitz}
   d(\D,\D') \,\leq\, \ell_{n,\delta}\,
   \norm{\Lambda_{f_1,f_2}(\D)-\Lambda_{f_1,f_2}(\D')}_{(L^2(\partial\Omega))^2}
\ee
for every pair of polygons $\D,\D'\in\A_{n,\delta}$, where the metric
$d$ is defined in \req{metric}.
\end{theorem}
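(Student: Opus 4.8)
The plan is to combine the five ingredients that the author listed in the introduction, treating the proof as an instance of the Bacchelli–Vessella–Bourgeois abstract framework rather than something to be built from scratch. The decisive facts are all now available: by Theorem~\ref{Thm:Fprime-injective} the shape derivative $\partial\Lambda_{f_1,f_2}(\D)$ is injective on $\S^2$, by Proposition~\ref{Prop:Frechet} the parametrized forward map $F$ is $C^1$ on the open neighbourhood $\U_{n,\delta}$ with $F'(\xx)=\partial\Lambda_{f_1,f_2}(\D)S_\xx$, and by Seo's uniqueness result $\Lambda_{f_1,f_2}$ itself is injective on admissible polygons. What remains is the passage from these pointwise and local statements to the global Lipschitz estimate~\req{Thm:Lipschitz}, and the main work is bookkeeping with the compact parameter set $\X_{n,\delta}$ and the pseudometric $d$.

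Concretely, I would argue by contradiction and compactness. Suppose no uniform $\ell_{n,\delta}$ exists; then there are sequences $\D^{(m)},\tilde\D^{(m)}\in\A_{n,\delta}$ with coordinate vectors $\xx^{(m)},\ytilde^{(m)}\in\X_{n,\delta}$ such that
\bdm
   d(\D^{(m)},\tilde\D^{(m)}) \,>\, m\,
   \norm{\Lambda_{f_1,f_2}(\D^{(m)})-\Lambda_{f_1,f_2}(\tilde\D^{(m)})}_{(L^2(\partial\Omega))^2}\,.
\edm
Because $\X_{n,\delta}$ is compact, after relabelling the vertices so that $d$ is realized with the index shift $j=0$ and passing to a subsequence, both sequences converge to limits $\xx^*,\yy^*\in\X_{n,\delta}$. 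One distinguishes two regimes. If the limits differ, $\xx^*\neq\yy^*$, then $d(\D^*,\tilde\D^*)>0$ stays bounded away from zero while the right-hand side forces $\Lambda_{f_1,f_2}(\D^*)=\Lambda_{f_1,f_2}(\tilde\D^*)$, contradicting Seo's injectivity of $\Lambda_{f_1,f_2}$ on admissible polygons. If instead $\xx^*=\yy^*=\zz^*$, the two sequences collapse to a common point, and here one uses the local Lipschitz lower bound coming from injectivity of $F'(\zz^*)$: since $F'(\zz^*)$ is injective on the finite-dimensional space $(\R^2)^n$ it is bounded below, $\norm{F'(\zz^*)\dd}\geq c\norm{\dd}$, and continuity of $\xx\mapsto F'(\xx)$ (the $C^1$ property) together with the Taylor remainder estimate~\req{Taylorrest} upgrades this to a genuine local Lipschitz estimate $\norm{F(\xx)-F(\ytilde)}\geq \tfrac{c}{2}\norm{\xx-\ytilde}$ on a neighbourhood of $\zz^*$, which contradicts the assumed sequence for large $m$.

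For the local step I would make the standard estimate explicit: for $\xx,\ytilde$ near $\zz^*$, write
\bdmal
   \norm{F(\xx)-F(\ytilde)}
   &\,\geq\, \norm{F'(\zz^*)(\xx-\ytilde)}
             \,-\, \norm{F(\xx)-F(\ytilde)-F'(\zz^*)(\xx-\ytilde)}\\[1ex]
   &\,\geq\, c\,\norm{\xx-\ytilde} \,-\, \omega(\xx,\ytilde)\,\norm{\xx-\ytilde}\,,
\edmal
where $\omega$ tends to zero with the diameter of the neighbourhood; this follows from \req{Taylorrest} and from $\norm{F'(\xx)-F'(\zz^*)}\to 0$. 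Shrinking the neighbourhood so that $\omega\leq c/2$ yields the one-sided bound. Finally one has to translate between the pseudometric $d$ on polygons and the norm $\norm{\cdot}$ on $\X_{n,\delta}$: by definition $d(\D,\D')\leq\norm{\xx-\ytilde}$ for any labelling, and the minimum over shifts in \req{metric} can only decrease the distance, so the estimate obtained in the coordinate norm transfers to $d$ after taking the optimal relabelling.

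\emph{The main obstacle} is not any single hard inequality but the careful handling of the relabelling freedom in the pseudometric: one must verify that passing to convergent subsequences is compatible with the minimizing index shift $j$ in~\req{metric}, so that the limiting polygons are compared under a consistent vertex correspondence, and that the injectivity arguments (global for $\Lambda_{f_1,f_2}$, infinitesimal for $F'$) are invoked with matching enumerations. Since there are only finitely many shifts $j\in\{0,\dots,n-1\}$, one can fix a single shift along a subsequence, which makes this bookkeeping routine but unavoidable.
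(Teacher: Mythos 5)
Your proposal is correct and follows essentially the same route as the paper: a contradiction-by-compactness argument on $\X_{n,\delta}$ in which Seo's uniqueness (plus continuity of $F$) forces the two limit polygons to coincide, and injectivity of the shape derivative then rules out the remaining collapse, with the relabelling of vertices handled just as the paper does by fixing a consistent enumeration. The only immaterial difference is how the injectivity of $F'$ is exploited at the end: you turn it into a uniform lower bound $\norm{F'(\zz^*)\dd}\geq c\norm{\dd}$ (valid by finite-dimensionality) and hence a local Lipschitz lower bound for $F$ near the common limit, whereas the paper normalizes the differences $\dd_m=(\xx_m-\xx_m')/\norm{\xx_m-\xx_m'}$ and extracts a limiting unit vector in the kernel of $F'(\xx)$ --- two equivalent ways of finishing the same argument, yours even dispensing with the paper's observation that $d(\D_m,\D_m')=\norm{\xx_m-\xx_m'}$ for small separations, since the one-sided inequality $d\leq\norm{\cdot}$ suffices.
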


\begin{proof}
We assume \req{Thm:Lipschitz} to be wrong, i.e., we assume that there exist
two sequences $(\D_m)_m,(\D_m')_m\subset\A_{n,\delta}$ with vertex coordinates
$\xx_m$ and $\xx_m'$ in $(\R^2)^n$, such that
\be{Thm:Lipschitz-wrong}
   d(\D_m,\D_m') \,>\, \eta_m\,\norm{F(\xx_m)-F(\xx_m')}_{(L^2(\partial\Omega))^2}\,,
\ee
where $\eta_m\to\infty$ for $m\to\infty$. 
Since $\X_{n,\delta}$ is compact we can find a subsequence $(m_l)_{l\in\N}$ 
of indices, such that the associated subsequences
$(\xx_{m_l})_l$ and $(\xx'_{m_l})_l$ converge, 
i.e., there exist $\xx,\xx'\in\X_{n,\delta}$ with
\bdm
   \xx_{m_l}\to\xx \quad \text{and} \quad \xx_{m_l}'\to \xx' \qquad
   \text{for $l\to\infty$}\,.
\edm
For ease of notation we consider these subsequences to be the
original sequences that we have started with.
For the two polygons $\D$ and $\D'$ in $\A_{n,\delta}$ associated with $\xx$ and 
$\xx'$, respectively, we then have
\bdm
   d(\D_m,\D) \,\to\, 0 \quad \text{and} \quad d(\D_m',\D')\,\to\,0 \qquad 
   \text{for $m\to \infty$}\,,
\edm
and hence,
\be{DmtoD}
   d(\D_m,\D_m') \,\to\, d(\D,\D') \,<\, \infty\,.
\ee
We further conclude that
\bdm
   F(\xx_m) \,\to\, F(\xx) \quad \text{and} \quad
   F(\xx_m')\,\to\, F(\xx') \qquad \text{for $m\to\infty$}\,,
\edm
because $F$ is differentiable.
Together with \req{Thm:Lipschitz-wrong} and \req{DmtoD} this implies
that $F(\xx)=F(\xx')$. In other words, 
$\Lambda_{f_1,f_2}(\D)=\Lambda_{f_1,f_2}(\D')$,
and by Seo's uniqueness result we necessarily have $\D=\D'$.
Without loss of generality we can assume in the sequel that $\xx=\xx'$; 
otherwise, we reenumerate the vertices of $\D_m'$ for every $m\in\N$ in such 
a way that the enumeration of the vertices of $\D$ and $\D'$ is the same.

For $m$ sufficiently large we now rewrite
\bdmal
   &F(\xx_m) - F(\xx_m') 
    \,=\, \int_0^1 F'\bigl(\xx_m'+t(\xx_m-\xx_m')\bigr)(\xx_m-\xx_m')\dt \\[1ex]
   &\qquad
    \,=\, F'(\xx)(\xx_m-\xx_m') \,+\, 
          \int_0^1\Bigl(F'\bigl(\xx_m'+t(\xx_m-\xx_m')\bigr)
                       -F'(\xx)\Bigr)(\xx_m-\xx_m')\dt\,,
\edmal
and hence, introducing
\bdm
   \dd_m \,=\, \frac{\xx_m-\xx_m'}{\norm{\xx_m-\xx_m'}}
\edm
and
\bdm
   \eps_m \,=\, 
   \int_0^1\bigl\|F'\bigl(\xx_m'+t(\xx_m-\xx_m')\bigr)-F'(\xx)\bigr\|\dt\,,
\edm
we arrive at
\be{Fdiffest}
   \norm{F(\xx_m)-F(\xx_m')}_{(L^2(\partial\Omega))^2}
   \,\geq\, \Bigl(\norm{F'(\xx)\dd_m}_{(L^2(\partial\Omega))^2} \,-\, \eps_m\Bigr)
            \norm{\xx_m-\xx_m'}\,.
\ee
Take note that $\xx_m\neq\xx_m'$ because $d(\D_m,\D_m')>0$ according to
\req{Thm:Lipschitz-wrong}, and that $\eps_m\to 0$ as $m\to\infty$ 
by the continuity of $F'$ and the fact that
\bdm
   \bigl\|\xx_m' + t(\xx_m-\xx_m') - \xx\bigr\| 
   \,\leq\, t\,\norm{\xx_m-\xx} + (1-t)\norm{\xx_m'-\xx}
   \,\to\, 0
\edm
as $m\to\infty$, uniformly for $t\in[0,1]$.
Inserting \req{Fdiffest} into \req{Thm:Lipschitz-wrong} we thus conclude that
\be{Fprimedm}
   \norm{F'(\xx)\dd_m}_{(L^2(\partial\Omega))^2}
   \,<\, \eps_m \,+\, \frac{1}{\eta_m}\,
         \frac{d(\D_m,\D_m')}{\|\xx_m-\xx_m'\|} 
\ee
for $m$ sufficiently large. 
Since $\xx_m-\xx_m'\to 0$ and the vertices of $\D_m$ 
(and of $\D_m'$, respectively) are at least $\delta$ apart by requirement~(i)
in the definition of $\A_{n,\delta}$, we conclude from \req{metric} that
\bdm
   d(\D_m,\D_m') \,=\, \norm{\xx_m-\xx_m'}\,, \qquad
   \text{if \ $\|\xx_m-\xx_m'\| \,<\,\delta/2$}\,,
\edm
and hence, the right-hand side of \req{Fprimedm} goes to zero as $m\to\infty$.

Since $(\dd_m)_m$ consists of unit vectors from $(\R^2)^n$ we can find a 
convergent subsequence -- again denoted by $(\dd_m)_m$ -- with
\bdm
   \dd_m \,\to\,\dd\,, \qquad m\to\infty\,,
\edm
where the limit $\dd=[d_1,\dots,d_n]\in(\R^2)^n$ also has norm one. 
It thus follows from \req{Fprimedm} that
\bdm
   0 \,=\, F'(\xx)\dd \,=\, \partial\Lambda_{f_1,f_2}(\D)h\,,
\edm
where $0\neq h\in\S^2$ is given by \req{interpol}. But this violates our
injectivity result in Theorem~\ref{Thm:Fprime-injective}, and hence, we
have the desired contradition to \req{Thm:Lipschitz-wrong}.
\end{proof}

As we have mentioned in the introduction the idea of this proof 
is borrowed from \cite{BaVe06,Bour13}. We rearranged the argumentation, 
though, to deal with the difficulties that the operator $F$ of \req{F} 
fails to be injective and its domain $\X_{n,\delta}$ is not convex.

For ease of completeness we also state the following Lipschitz stability
result for admissible polygons with \emph{at most} $N$ vertices.

\begin{corollary}
\label{Cor:Lipschitz}
Let $\delta>0$ and $\B_{N,\delta}=\bigcup_{n=3}^N\A_{n,\delta}$ for some $N\geq 3$. 
Further, let the two piecewise constant probing currents 
$f_1,f_2\in L_\diamond^2(\partial\Omega)$ fulfill Seo's Assumption~\req{Ass:Seo}. 
Then there is a Lipschitz constant $L_{N,\delta}$, depending only on 
$N$, $\delta$, $k$, $\Omega$, and $f_{1,2}$, such that
\bdm
   d_H(\partial\D,\partial\D') \,\leq\, L_{N,\delta} \,
   \norm{\Lambda_{f_1,f_2}(\D) - \Lambda_{f_1,f_2}(\D')}_{L^2(\partial\Omega)^2}
\edm
for every pair of polygons $\D,\D'\in\B_{N,\delta}$, where $d_H$ denotes the
Hausdorff metric.
\end{corollary}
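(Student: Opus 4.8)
The plan is to reduce the statement to Theorem~\ref{Thm:Lipschitz} on each stratum of fixed vertex number and to handle pairs of polygons with differing vertex numbers by a separate compactness argument resting on Seo's uniqueness result. Since $\B_{N,\delta}=\bigcup_{n=3}^N\A_{n,\delta}$ is a \emph{finite} union and $\Lambda_{f_1,f_2}$ is one and the same operator on all strata, it suffices to produce, for every ordered pair $(n,n')$ with $3\le n,n'\le N$, a finite constant $L_{n,n'}$ such that
\[
   d_H(\partial\D,\partial\D') \,\le\, L_{n,n'}\,
   \norm{\Lambda_{f_1,f_2}(\D)-\Lambda_{f_1,f_2}(\D')}_{(L^2(\partial\Omega))^2}
\]
for all $\D\in\A_{n,\delta}$ and $\D'\in\A_{n',\delta}$; the desired constant is then $L_{N,\delta}=\max_{n,n'}L_{n,n'}$, a maximum over finitely many pairs.

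For the diagonal pairs $n=n'$ I would simply invoke Theorem~\ref{Thm:Lipschitz}, which furnishes $\ell_{n,\delta}$ with $d(\D,\D')\le\ell_{n,\delta}\norm{\cdots}$, and combine it with \req{Hausdorff}, namely $d_H(\partial\D,\partial\D')\le d(\D,\D')$, to obtain the Hausdorff bound with $L_{n,n}=\ell_{n,\delta}$; no further work is required here.

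The genuinely new case is $n\neq n'$, where the vertex metric $d$ is not even defined, and I would argue by contradiction. If no finite $L_{n,n'}$ existed, there would be sequences $(\D_m)\subset\A_{n,\delta}$ and $(\D_m')\subset\A_{n',\delta}$ with $d_H(\partial\D_m,\partial\D_m')>\eta_m\norm{\Lambda_{f_1,f_2}(\D_m)-\Lambda_{f_1,f_2}(\D_m')}$ and $\eta_m\to\infty$. Using compactness of $\X_{n,\delta}$ and $\X_{n',\delta}$ I would pass to convergent subsequences of the vertex vectors, obtaining limit polygons $\D\in\A_{n,\delta}$ and $\D'\in\A_{n',\delta}$; since $F\in C^1$ on each stratum (Proposition~\ref{Prop:Frechet}) this yields $\Lambda_{f_1,f_2}(\D_m)\to\Lambda_{f_1,f_2}(\D)$ and $\Lambda_{f_1,f_2}(\D_m')\to\Lambda_{f_1,f_2}(\D')$. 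Because $d_H(\partial\D_m,\partial\D_m')$ stays bounded (both boundaries lie in the bounded set $\Omega$) while $\eta_m\to\infty$, the failure inequality forces $\norm{\Lambda_{f_1,f_2}(\D_m)-\Lambda_{f_1,f_2}(\D_m')}\to0$, and hence $\Lambda_{f_1,f_2}(\D)=\Lambda_{f_1,f_2}(\D')$ in the limit.

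The hard part is to convert this into a contradiction, and here the key is Seo's injectivity of the operator~\req{Map:Seo}: it forces $\D=\D'$. This, however, is impossible once $n\neq n'$, because requirement~(ii) in the definition of $\A_{n,\delta}$ keeps every interior angle bounded away from $\pi$ (a condition preserved in the limit), so the genuine corners of an admissible polygon are exactly its vertices; an $n$-gon and an $n'$-gon with $n\neq n'$ therefore describe different geometric inclusions. This contradiction yields the finite constants $L_{n,n'}$ and completes the proof. The main obstacle I anticipate is precisely this cross-stratum bookkeeping — ensuring that the limit configurations remain admissible (which holds since admissibility is a closed condition and $\X_{n,\delta}$ is compact) and that the elementary but essential observation \emph{different vertex count $\Rightarrow$ different polygon} is legitimate under admissibility.
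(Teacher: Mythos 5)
Your proposal is correct, and it relies on exactly the same ingredients as the paper's proof --- compactness of $\X_{n,\delta}$, continuity of $F$ from Proposition~\ref{Prop:Frechet}, Seo's uniqueness of the nonlinear map \req{Map:Seo}, Theorem~\ref{Thm:Lipschitz}, and the bound \req{Hausdorff} --- but it organizes them differently. The paper runs a \emph{single} contradiction argument over all of $\B_{N,\delta}$: it first extracts, by pigeonhole, subsequences along which the vertex numbers $n$ and $n'$ are fixed, passes to limit polygons, uses Seo's uniqueness to conclude $\D=\D'$ and hence $n=n'$, and only then invokes Theorem~\ref{Thm:Lipschitz} on the \emph{sequences} $(\D_m,\D_m')$ to show that $(\eta_m)_m$ stays bounded, which is the contradiction. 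You instead stratify upfront into the finitely many ordered pairs $(n,n')$ and take a maximum of constants: the diagonal pairs are dispatched by a direct citation of Theorem~\ref{Thm:Lipschitz} together with \req{Hausdorff}, with no contradiction argument at all, while the contradiction argument is confined to the cross-stratum pairs $n\neq n'$, where it terminates already at Seo's uniqueness, because an admissible $n$-gon cannot coincide with an admissible $n'$-gon when $n\neq n'$. That last fact --- which you correctly ground in condition (ii) of admissibility, $|\alpha_i-\pi|\geq\delta$, so that vertices are genuine corners and the vertex count is a geometric invariant --- is used implicitly by the paper as well, in the step ``and in particular, that $n=n'$''. Your organization buys a cleaner separation of concerns: it makes visible that the off-diagonal case needs only uniqueness and compactness (not the linearized injectivity of Theorem~\ref{Thm:Fprime-injective}, which enters only through the diagonal Theorem~\ref{Thm:Lipschitz}), and it avoids re-running the Lipschitz estimate inside a contradiction. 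The paper's organization buys brevity: one argument covers both cases, with the pigeonhole step replacing your bookkeeping over pairs, and it never needs the metric $d$ to compare polygons from different strata, exactly as in your remark that $d$ is undefined there.
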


\begin{proof}
As in the previous proof we assume to the contrary that there are sequences
$(\D_m)_m,(\D_m')_m\subset\B_{N,\delta}$ with
\be{Ass:Cor:Lipschitz}
   d_H(\partial\D_m,\partial\D_m') \,>\, \eta_m\,
   \norm{\Lambda_{f_1,f_2}(\D_m) - \Lambda_{f_1,f_2}(\D_m')}_{L^2(\partial\Omega)^2}\,,
\ee
where $\eta_m\to\infty$ as $m\to\infty$. Then there are infinitely many
indices $m_l$, $l\in\N$, and two natural numbers $n,n'\in\{3,\dots,N\}$ such
that all $\D_{m_l}$ are $n$-gons and all $\D_{m_l}'$ are $n'$-gons.
Again we assume that these two subsequences have been the original ones,
and as in the proof of Theorem~\ref{Thm:Lipschitz} we can further assume 
without loss of generality that the corresponding vectors $\xx_m\in\X_{n,\delta}$
and $\xx_m'\in\X_{n',\delta}$ with the coordinates of the vertices of
$\D_m$ and $\D_m'$, respectively, converge. If we denote the polygons
corresponding to the two limit vectors
by $\D\in\A_{n,\delta}$ and $\D'\in\A_{n',\delta}$ this implies that
\bdmal
   &\norm{\Lambda_{f_1,f_2}(\D) - \Lambda_{f_1,f_2}(\D')}_{L^2(\partial\Omega)^2}
   \,=\, \lim_{m\to\infty} 
         \norm{\Lambda_{f_1,f_2}(\D_m)
               - \Lambda_{f_1,f_2}(\D_m')}_{L^2(\partial\Omega)^2}\\[1ex]
   &\qquad
    \,\leq\, \lim_{m\to\infty} \frac{1}{\eta_m}\, 
             d_H(\partial\D_m,\partial\D_m') \,=\, 0\,,
\edmal
because $d_H(\partial\D_m,\partial\D_m')\to d_H(\partial\D,\partial\D')$.
From Seo's uniqueness result therefore follows that $\D=\D'$, 
and in particular, that $n=n'$. 
We can thus apply Theorem~\ref{Thm:Lipschitz} to conclude that
\be{Cor:Lipschitz}
   d(\D_m,\D_m') \,\leq\, \ell_{n,\delta}\,
   \norm{\Lambda_{f_1,f_2}(\D_m) - \Lambda_{f_1,f_2}(\D_m')}_{L^2(\partial\Omega)^2}
\ee
for all $m\in\N$. Combined with \req{Ass:Cor:Lipschitz} and \req{Hausdorff} 
this implies that the sequence $(\eta_m)_m$ is bounded, 
which is the desired contradiction.
\end{proof}

\section{The case of an insulating polygonal inclusion}
\label{Sec:insulating_case}
So far we have assumed that the conductivity $k$ of the inclusion is positive.
The limiting case $k=0$ corresponds to an insulating inclusion, which is
more adequately modeled by the boundary value problem
\be{insulating-pde}
   \Delta u=0 \quad \text{in $\Omega\setminus\overline\D$}\,, \qquad
   \frac{\partial}{\partial\nu} u = 0 \quad \text{on $\partial\D$}\,,\qquad
   \frac{\partial}{\partial\nu} u = f \quad \text{on $\partial\Omega$}\,.
\ee
If $\D$ is an admissible polygon, then \req{insulating-pde} has a unique
weak solution in
\bdm
   u\in H^1_\diamond(\Omega\setminus\overline\D) \,=\,
   \Bigl\{\,u \in H^1(\Omega\setminus\overline\D) \,:\,
            \int_{\partial\Omega}u\ds = 0\,\Bigr\} \,,
\edm
and we now use
\bdm
   \Lambda_f \,:\, \D \,\mapsto\, u|_{\partial\Omega} 
   \,\in\, L^2_\diamond(\partial\Omega)
\edm
as the associated forward operator. Recall from the introduction that
$\D$ is uniquely determined by $\Lambda_f(\D)$, provided that $f\neq 0$,
cf.~\cite{BeVe98}.

Again, $\Lambda_f$ turns out to be shape differentiable for polygonal
inclusions, cf.~\cite{Hank24a}: For $h\in \S^2$ the shape derivative
$\partial\Lambda_f(\D)h$ is given by the trace on $\partial\Omega$ of
the solution $u'$ of the Neumann boundary value problem
\be{Hank24}
\begin{array}{c}
   \Delta u' \,=\, 0 \quad \text{in $\Omega\setminus\overline\D$}\,, \qquad
   {\displaystyle \int_{\partial\Omega} u'\ds \,=\, 0\,,}
   \\[3ex]
   \dfrac{\partial}{\partial\nu} u'
   \,=\, \dfrac{\partial}{\partial\tau}
         \Bigl((h\cdot\nu)\dfrac{\partial}{\partial\tau}u\Bigr)
         \quad \text{on $\partial\D$}\,, \qquad
   \dfrac{\partial}{\partial\nu}u' \,=\, 0 \quad \text{on $\partial\Omega$}\,.
\end{array}
\ee
Note that $u$ is defined only in the exterior of $\D$, but using
the reflection principle it can be extended as a harmonic function across 
any edge of $\D$ into the interior of $\D$. Therefore $u$ is smooth on 
$\partial\D$, except for the vertices of $\D$.

We now establish injectivity of $\partial\Lambda_f(\D)$.

\begin{theorem}
\label{Thm:Fprime-injective0}
Let $\D$ be an admissible polygon and
$f\in L^2_\diamond(\partial\Omega)$ be a nontrivial probing current.
Then $\partial\Lambda_f(\D)$ is injective.
\end{theorem}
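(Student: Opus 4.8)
The plan is to imitate the proof of Theorem~\ref{Thm:Fprime-injective}, but to take advantage of the fact that in the insulating case the linearized potential $u'$ of \req{Hank24} lives only in the exterior region $\Omega\setminus\overline\D$; this makes the argument much shorter, and in particular Seo's Assumption~\req{Ass:Seo} and a second probing current will not be needed. So suppose $\partial\Lambda_f(\D)h=0$ for some $h\in\S^2$ and let $u'$ be the corresponding solution of \req{Hank24}. The vanishing of $\partial\Lambda_f(\D)h$ means that $u'$ has zero trace on $\partial\Omega$, and together with the homogeneous Neumann condition on $\partial\Omega$ from \req{Hank24} it follows that $u'$ has vanishing Cauchy data there. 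Exactly as in the proof of Theorem~\ref{Thm:Fprime-injective}, Holmgren's theorem and unique continuation then give $u'\equiv 0$ in the connected set $\Omega\setminus\overline\D$. Taking the normal trace on $\partial\D$ of the boundary condition in \req{Hank24} yields $\frac{\partial}{\partial\tau}\bigl((h\cdot\nu)\frac{\partial}{\partial\tau}u\bigr)=0$ on each relatively open edge, hence
\[
   (h\cdot\nu)\,\frac{\partial}{\partial\tau}u \,=\, c_i \qquad\text{on each edge $\Gamma_i$,}
\]
with constants $c_i\in\R$.

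Next I would argue, as in Theorem~\ref{Thm:Fprime-injective}, that if $h\neq 0$ then there is a vertex — say $x_1$ after relabelling — at which $h(x_1)\cdot\nu$ does not vanish on one of the adjacent edges, since the normals of two neighbouring edges form a basis of $\R^2$. After the harmless normalization of \req{varphi-local} I may assume $h\cdot\nu=1+b_2|x-x_1|$ on $\Gamma_2$. The decisive input is the local behaviour of $u$ at $x_1$. Because $u$ satisfies the homogeneous Neumann condition on the two edges meeting at $x_1$, the reflection principle produces an expansion analogous to \req{BFI92} on the exterior wedge of opening angle $2\pi-\alpha_1$, in the coordinates of \req{localcoordinates} (where now $\alpha_1<\theta<2\pi$ is the exterior):
\[
   u(x) \,=\, u(x_1) \,+\, \sum_{j=1}^\infty \beta_j\cos\bigl(\gamma_j(\theta-\alpha_1)\bigr)\,r^{\gamma_j},
   \qquad \gamma_j=\frac{j\pi}{2\pi-\alpha_1}.
\]
Here the eigenfunctions are pure cosines (unlike the conductive case, there is no transmission at an interior ray), and the series may be differentiated termwise. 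Restricting the tangential derivative to $\Gamma_2$, which corresponds to $\theta=2\pi$, gives
\[
   \frac{\partial}{\partial\tau}u \,=\, \pm\sum_{j=1}^\infty (-1)^j\beta_j\gamma_j\,r^{\gamma_j-1}
   \qquad\text{on $\Gamma_2$,}
\]
the sign depending only on the orientation of $\tau$.

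Inserting this into $(1+b_2r)\frac{\partial}{\partial\tau}u=c_2$ on $\Gamma_2$ and matching powers of $r$ then forces every $\beta_j$ to vanish. Indeed, the powers occurring on the left are $\{\gamma_j-1\}\cup\{\gamma_j\}$, and since $\alpha_1\neq\pi$ one checks that these two families are disjoint and that none of them equals $0$ (each coincidence would require the opening angle $2\pi-\alpha_1$ to be an integer multiple of $\pi$). Hence each $\beta_j$ sits alone in its own power of $r$: the term $r^{\gamma_1-1}$ must vanish — it either blows up, when $\gamma_1<1$, or is a nonconstant positive power, when $\gamma_1>1$ — so $\beta_1=0$, and successively $c_2=0$ and $\beta_j=0$ for all $j$. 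Consequently $u$ is constant in a neighbourhood of $x_1$ inside $\Omega\setminus\overline\D$, and by unique continuation $u$ is constant throughout the connected domain $\Omega\setminus\overline\D$. But then $\frac{\partial}{\partial\nu}u=0$ on $\partial\Omega$, i.e.\ $f=0$, contradicting the hypothesis that $f$ is nontrivial. Therefore $h=0$, and $\partial\Lambda_f(\D)$ is injective.

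The step that genuinely requires care is the Neumann-corner expansion and its termwise differentiability. I would either quote it from the companion paper \cite{Hank24a} or derive it directly from the reflection principle together with the standard singular-function theory for the Laplacian in a wedge with homogeneous Neumann data. Apart from that, the argument is substantially simpler than in the conductive case, as there is no interior Cauchy problem and no family of $2\times2$ systems to analyze: the vanishing of $u'$ on the whole exterior already reduces everything to the single scalar relation on $\Gamma_2$, which is why one nontrivial current suffices.
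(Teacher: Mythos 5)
Your proposal is correct, but at the decisive step it takes a genuinely different route from the paper's own proof. Both arguments share the same skeleton: Holmgren's theorem gives $u'\equiv 0$ in $\Omega\setminus\overline\D$, hence $(h\cdot\nu)\,\partial u/\partial\tau$ is constant on each edge, and the whole problem is to show that this constancy on $\Gamma_2$, with $h\cdot\nu=1+b\,|x-x_1|$, forces $u$ to be constant, whence $f=0$. The paper settles this step \emph{without} any corner expansion: it integrates the constancy relation to obtain explicit real-analytic Dirichlet data for $u$ on $\Gamma_2$ (the two cases in \req{u-Gamma-representation}), solves the resulting Cauchy problem in closed form in the local coordinates, and then evaluates $\partial u/\partial\nu$ on the \emph{other} edge $\Gamma_1$, where the insulating condition forces $c=0$ because $\sin\alpha_1\neq 0$. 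You instead expand $u$ at the vertex in the Neumann-wedge singular functions $\cos\bigl(\gamma_j(\theta-\alpha_1)\bigr)r^{\gamma_j}$, $\gamma_j=j\pi/(2\pi-\alpha_1)$, and match powers of $r$ in the relation on $\Gamma_2$ alone; the admissibility condition $\alpha_1\neq\pi$ enters through the disjointness of the exponent families $\{\gamma_j-1\}$, $\{\gamma_j\}$ and $\{0\}$, which you verify correctly (any coincidence would force $2\pi-\alpha_1\in\{\pi,2\pi\}$). Your route uses only one edge and mirrors the structure of the conductive-case proof (Theorem~\ref{Thm:Fprime-injective}) more closely, but it requires the corner expansion for the insulating wedge, its termwise differentiability, and tail control to justify the coefficient matching --- an ingredient the paper deliberately avoids, since it only needs smoothness of $u$ on the open edges (by reflection) plus uniqueness of the local Cauchy problem with analytic data. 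You flag this dependence appropriately; note that \req{BFI92} from \cite{BFI92} and \cite{Hank24a} concerns the \emph{transmission} corner, so you would indeed have to quote or prove the Neumann-wedge analogue (standard: $H^1$-boundedness excludes $\log r$ and the negative powers), after which your argument is complete.
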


\begin{proof}
Again we assume to the contrary that there exists some nontrivial $h\in\S^2$,
for which $\partial\Lambda_f(\D)h=0$. Further, as in the proof of
Theorem~\ref{Thm:Fprime-injective}, we stipulate without loss of generality 
that 
\be{varphi-local0}
   h(x)\cdot\nu(x) \,=\, 1 + b \,|x-x_1|\,, \qquad x\in\Gamma_2\,,
\ee
for some $b\in\R$. Throughout we use the same notation for the vertices, 
edges, and interior angles of $\D$ as in Section~\ref{Sec:forward}, 
and introduce the same local coordinates~\req{localcoordinates} near 
the vertices.

Let $u'$ be the solution of \req{Hank24} for this particular $h$.
Since $u'|_{\partial\Omega} = \partial\Lambda_f(\D)h = 0$ it follows from
Holmgren's theorem that $u'$ is vanishing in all of 
$\Omega\setminus\overline\D$. Therefore,
the Neumann boundary condition for $u'$ on $\partial\D$ in \req{Hank24}
must be zero, i.e., there is some constant $c\in\R$, such that
\bdm
   (h\cdot\nu)\, \frac{\partial}{\partial\tau} u \,=\, c \qquad
\text{on $\Gamma_2$}\,.
\edm
It thus follows from \req{varphi-local0} that
\be{u-Gamma-representation}  
   u(x) \,=\, 
   \begin{cases}
      u(x_1)\,+\,c\,|x-x_1|\,, & b=0\,, \\[1ex]
      u(x_1)\,+\,\dfrac{c}{b}\log\bigl|1+b\,|x-x_1|\bigr| \,, & b\neq 0\,,
   \end{cases}
   \qquad x\in\Gamma_2\,.
\ee   
Together with the insulating boundary condition $\partial u/\partial\nu = 0$ 
on $\Gamma_2$ this yields a Cauchy problem for $u$ 
with real analytic data on $\Gamma_2$, which has a unique solution 
in $\Omega\setminus\overline\D$. Consider first the case $b=0$. 
In this case the solution of the Cauchy problem is obviously given by
\bdm
   u(x) \,=\, u(x_1) + c\, r\cos\theta\,, \qquad
   0<r<r_0\,, \ \alpha_1\leq\theta\leq 2\pi\,,
\edm
in the local coordinate system~\req{localcoordinates}, and hence,
\bdm
   \left.\frac{\partial}{\partial\nu} u(x)\right|_{\Gamma_1}\!\!
   \,=\, \left.\!\frac{1}{r}\frac{\partial}{\partial\theta} u(x)
         \right|_{\theta=\alpha_1}\!\!
   \,=\, -c\sin\alpha_1\,, \qquad 0<r<r_0\,.
\edm
But this must be zero in the insulating case, proving that $c=0$,
because $\alpha_1\notin\{0,\pi,2\pi\}$.

In the case when $b\neq 0$ in \req{varphi-local0} we have 
\begin{align*}
   u(x) &\,=\, u(x_1) \,+\, 
               \frac{c}{b}\sum_{j=1}^\infty \frac{(-1)^{j+1}}{j}\, b^j|x-x_1|^j
\intertext{according to \req{u-Gamma-representation} for $x$ near $x_1$ on 
$\Gamma_2$, and the solution of the Cauchy problem is given by}
   u(x) &\,=\, u(x_1) \,+\, 
               c\sum_{j=1}^\infty \frac{(-1)^{j+1}}{j} b^{j-1} r^j \cos j\theta
\end{align*}
in the local coordinate system~\req{localcoordinates} with $0<r=|x-x_1|<r_0$
and $\alpha_1\leq\theta\leq 2\pi$, and hence,
\bdm
   0 \,=\, \left.\frac{\partial}{\partial\nu} u(x)\right|_{\Gamma_1} \!\!
     \,=\, c\sum_{j=0}^\infty (-1)^{j+1} \thsp b^j r^j\sin (j+1)\alpha_1
\edm
for $0<|x-x_0|<r_0$. Again this is only possible if $c=0$.

In either case we have shown that $c=0$ in
\req{u-Gamma-representation}, which implies that $u$ is constant 
in $\Omega\setminus\overline\D$.
But this is a contradiction to the assumption 
that $f$ is a nontrivial boundary current, i.e., that
\bdm
   f \,=\, \left.\!\frac{\partial}{\partial\nu} u\right|_{\partial\Omega} 
     \,\neq\, 0\,.
\edm
We have thus established the injectivity of $\partial\Lambda_f(\D)$.
\end{proof}

We can now proceed as in Section~\ref{Sec:Lipschitz}, define
for $\delta>0$ and $n\geq 3$ the operator
\bdm
   F:\begin{cases}
        \phantom{\xx\!\!}\U_{n,\delta} \to\, L^2_\diamond(\partial\Omega)\,,\\
        \phantom{\U_{n,\delta}\!\!\!}\xx \,\mapsto\, \Lambda_f(\D)\,,
     \end{cases}
\edm
and use results from \cite{Hank24a} to show that $F$ belongs to 
$C^1(\U_{n,\delta},L^2_\diamond(\partial\Omega))$.
Without any change of proof we thus get the following result.

\begin{theorem}
\label{Thm:k=0}
Let $\delta>0$ and $n,N\geq 3$. Furthermore, let $\A_{n,\delta}$ and 
$\B_{N,\delta}$ be defined as in Section~\ref{Sec:Lipschitz},
and let $f\in L_\diamond^2(\partial\Omega)$ be a nontrivial probing current.
Then there are positive Lipschitz constants $\ell'_{n,\delta}$ and $L'_{N,\delta}$, 
depending only on $n$ (resp.\ $N$), $\delta$, $\Omega$, and $f$, such that
\bdm
   d(\D,\D') \,\leq\,
   \ell'_{n,\delta}\,\norm{\Lambda_f(\D)-\Lambda_f(\D')}_{L^2(\partial\Omega)}
\edm
for every pair of polygons $\D,\D'\in\A_{n,\delta}$, and
\bdm
   d_H(\partial\D,\partial\D') \,\leq\,
   L'_{N,\delta}\,\norm{\Lambda_f(\D)-\Lambda_f(\D')}_{L^2(\partial\Omega)}
\edm
for every pair of polygons $\D,\D'\in\B_{N,\delta}$.
Here, $d$ is the metric defined in \req{metric} and $d_H$ is the
Hausdorff metric.
\end{theorem}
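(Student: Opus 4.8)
The plan is to reuse, essentially verbatim, the compactness-and-contradiction scheme that established Theorem~\ref{Thm:Lipschitz} and Corollary~\ref{Cor:Lipschitz}, with the two-current conductive forward map $\Lambda_{f_1,f_2}$ replaced by the single-current insulating map $\Lambda_f$. That scheme rests on three structural facts, all of which are available in the present setting: (a) the parametrized operator $F:\xx\mapsto\Lambda_f(\D)$ belongs to $C^1(\U_{n,\delta},L^2_\diamond(\partial\Omega))$ with $F'(\xx)\dd=\partial\Lambda_f(\D)S_\xx\dd$, which follows from the estimates of \cite{Hank24a} exactly as in Proposition~\ref{Prop:Frechet}; (b) the injectivity of the Jacobian $\partial\Lambda_f(\D)$, which is precisely Theorem~\ref{Thm:Fprime-injective0}; and (c) the global uniqueness $\Lambda_f(\D)=\Lambda_f(\D')\Rightarrow\D=\D'$ for a single nontrivial $f$, which is the Beretta--Vessella result \cite{BeVe98}.

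For the estimate in the metric $d$ on $\A_{n,\delta}$, I would argue by contradiction as in Theorem~\ref{Thm:Lipschitz}. Supposing the bound fails, one obtains sequences $(\D_m)_m,(\D_m')_m\subset\A_{n,\delta}$ with
\bdm
   d(\D_m,\D_m') \,>\, \eta_m\,\norm{\Lambda_f(\D_m)-\Lambda_f(\D_m')}_{L^2(\partial\Omega)}\,, \qquad \eta_m\to\infty\,.
\edm
Compactness of $\X_{n,\delta}$ yields convergent vertex vectors $\xx_m\to\xx$ and $\xx_m'\to\xx'$; continuity of $F$ together with the failing inequality forces $\Lambda_f(\D)=\Lambda_f(\D')$, whence (c) gives $\D=\D'$ and, after reenumeration, $\xx=\xx'$. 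Writing $F(\xx_m)-F(\xx_m')$ as the mean-value integral of $F'$, normalizing $\dd_m=(\xx_m-\xx_m')/\norm{\xx_m-\xx_m'}$, and using that the $\delta$-separation of vertices forces $d(\D_m,\D_m')=\norm{\xx_m-\xx_m'}$ once $\norm{\xx_m-\xx_m'}<\delta/2$, I would pass to a limit $\dd_m\to\dd$ with $\norm{\dd}=1$ and obtain $\partial\Lambda_f(\D)h=0$ for the nonzero $h=S_\xx\dd$, contradicting (b). The Hausdorff estimate on $\B_{N,\delta}$ then follows exactly as in Corollary~\ref{Cor:Lipschitz}: a pigeonhole selection reduces to sequences of $n$-gons and $n'$-gons; uniqueness (c) forces the two limits, and hence $n$ and $n'$, to coincide; the $n$-gon bound just proved applies; and $d_H\leq d$ of \req{Hausdorff} delivers the contradiction.

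The point of the argument is that no step of the conductive case is sensitive to the replacement of the data, so nothing genuinely has to be redone here. The only new ingredient is the Jacobian injectivity, which is why the real work sits in Theorem~\ref{Thm:Fprime-injective0} rather than in this proof, while facts (a) and (c) are simply quoted. Accordingly I do not anticipate a substantive obstacle in assembling Theorem~\ref{Thm:k=0}. The one detail worth verifying is that the $C^1$ bounds from \cite{Hank24a} hold uniformly over the compact family $\A_{n,\delta}$ in the insulating geometry, since the limiting argument needs $\eps_m\to 0$ uniformly in $t\in[0,1]$; but this is the same uniformity already invoked in Proposition~\ref{Prop:Frechet}, and it transfers without modification.
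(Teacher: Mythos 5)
Your proposal is correct and is exactly the route the paper takes: after establishing Theorem~\ref{Thm:Fprime-injective0}, the paper simply defines $F:\xx\mapsto\Lambda_f(\D)$, invokes the $C^1$ property from \cite{Hank24a} and the uniqueness result of \cite{BeVe98}, and states that the arguments of Theorem~\ref{Thm:Lipschitz} and Corollary~\ref{Cor:Lipschitz} apply ``without any change of proof.'' Your write-up, if anything, spells out this transfer in more detail than the paper does.
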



\end{document}